\documentclass[preprint,review,12pt]{elsarticle}



 \usepackage{graphics}
\usepackage{graphicx}

\usepackage{amssymb}
\usepackage{amsthm,amsmath}
\usepackage{subfigure}
\usepackage{multirow}

\usepackage{citesort,natbib}



\biboptions{sort&compress}

\journal{Journal of Multivariate Analysis}

\begin{document}

\begin{frontmatter}


\title{ Extreme Eigenvalue Distributions of Some Complex Correlated Non-Central Wishart
and Gamma-Wishart Random Matrices}


\author{Prathapasinghe Dharmawansa\corref{mc}}
\ead{prathapakd@gmail.com}
\author{Matthew R. McKay}
\ead{eemckay@ust.hk}

\address{Department of Electronic and Computer Engineering, Hong Kong University of Science and Technology, Clear Water Bay, Kowloon, Hong Kong}
\cortext[mc]{Corresponding author. Fax: (852) 2358 1485. }

\begin{abstract}
Let $\mathbf{W}$ be a correlated complex non-central Wishart matrix defined through $\mathbf{W}=\mathbf{X}^H\mathbf{X}$, where $\mathbf{X}$ is $n\times m \, (n\geq m)$ complex Gaussian with non-zero mean $\boldsymbol{\Upsilon}$ and non-trivial covariance $\boldsymbol{\Sigma}$. We derive exact expressions for the cumulative distribution functions (c.d.f.s) of the extreme eigenvalues (i.e., maximum and minimum) of $\mathbf{W}$  for some particular cases. These results are quite simple, involving rapidly converging infinite series, and apply for the practically important case where $\boldsymbol{\Upsilon}$ has rank one. We also derive analogous results for a certain class of gamma-Wishart random matrices, for which $\boldsymbol{\Upsilon}^H\boldsymbol{\Upsilon}$ follows a matrix-variate gamma distribution. The eigenvalue distributions in this paper have various applications to wireless communication systems, and arise in
other fields such as econometrics, statistical physics, and multivariate statistics.
\end{abstract}

\begin{keyword}
Non-Central Wishart Matrix \sep Eigenvalue Distribution \sep Hypergeometric Function
\MSC 60B20 \sep 62H10 \sep 33C15
\end{keyword}
\end{frontmatter}

\newtheorem{theorem}{Theorem}
\newtheorem{definition}{Definition}
\newtheorem{lemma}{Lemma}
\newtheorem{corollary}{Corollary}
\newtheorem{remark}{Remark}


\section{Introduction}
\label{intro}

Eigenvalue distributions of Wishart random matrices arise in many
fields. Prominent examples include wireless communication systems
\cite{telatar,Shijin,Alouini,Matthew,MatthewIT,Chi,PeterJ}, synthetic
aperture radar (SAR) signal processing \cite{Martinez}, econometrics
\cite{Stock}, statistical physics \cite{Bronk,Wig}, and multivariate
statistical analysis \cite{James1964,Chuk,Sengupta,Guptanew}. In many cases, the Wishart
matrices of interest are complex \cite{Good}, correlated, and non-central.  Such
matrices arise, for example, in multiple-input
multiple-output (MIMO) communication channels
characterized by line-of-sight components (i.e.,\ Rician
fading) with spatial correlation amongst the antenna elements
\cite{MatthewIT}.

In this paper, a main focus is on the distributions of the \emph{extreme} eigenvalues (i.e., maximum and minimum) of Wishart matrices, which arise in many areas. For example, in the context of contemporary wireless communication systems, the maximum eigenvalue distribution is instrumental to the analysis of MIMO multi-channel beamforming systems \cite{Shijin} and the analysis of MIMO maximal ratio combining receivers \cite{Alouini,Matthew}, whereas the minimum eigenvalue distribution is important for the design and analysis of adaptive MIMO multiplexing-diversity switching systems \cite{Robert1}, as well as the analysis of linear MIMO receiver structures \cite{Nara}. In the context of econometrics, the minimum eigenvalue of a non-central Wishart matrix is important for characterizing the weak instrument asymptotic distribution of the Cragg-Donald statistic \cite{Stock}. In statistical physics, information pertaining to the nature of entanglement of a random pure quantum state can be obtained from the two extreme eigenvalue densities of Wishart matrices \cite{Satya}. Moreover, the maximal and minimal height distributions of $N$ non-intersecting fluctuating interfaces at the thermal equilibrium and with a certain external potential are also related to the extreme eigenvalues of a Wishart matrix \cite{Nadal}. As a final example, in SAR signal processing, the probability density of the maximum eigenvalue a Wishart matrix is an important parameter for target detection and analysis \cite{Martinez}.

We focus primarily on correlated complex non-central Wishart matrices, as well as another important and closely related class of random matrices, which we refer to \emph{gamma-Wishart}. Such matrices arise in the context of MIMO land mobile satellite (LMS) communication systems \cite{Alfano}, and correspond to non-central Wishart matrices with a  random non-centrality matrix having a distribution which is intimately related to the matrix-variate gamma. As discussed in \cite{Alfano}, the eigenvalues of gamma-Wishart random matrices are important for the design and analysis of MIMO LMS systems; for example, the maximum eigenvalue density determines the performance of beamforming transmission techniques, whereas the minimum eigenvalue density is closely related to the performance of linear reception techniques.

Recently, the marginal eigenvalue distributions of random matrices have received much attention; for surveys, see \cite{Verdu,Edleman,McKayThesis}. For the extreme eigenvalues, distributional results are now available for correlated central, uncorrelated central, and uncorrelated
noncentral complex Wishart matrices (see, for example, \cite{Khatri1964,Khatri1968,Matthew,Chi,Forrester,Forrester2,ChenManning,Aris,Alouini,Shijin,Ranjan1,Maraf,Zanella,Kov1,Kov2,Rathna1}). Far less is known for gamma-Wishart matrices, other than the results in \cite{Alfano}, which deal exclusively with uncorrelated matrices. In the majority of cases, the standard approach has been to integrate the respective joint eigenvalue densities over suitably chosen multi-dimensional regions. For the more general class of complex non-central Wishart and gamma-Wishart matrices with \emph{non-trivial correlation} however, there appears to be no tractable existing results. For these matrices, as we will show, the joint eigenvalue densities are extremely complicated, and it seems that this direct approach cannot be easily undertaken to yield meaningful results.

In this paper, by employing an alternative derivation technique (also considered in \cite{Davis1979,Muirhead,Const,Mathai,Rathna,Kov1}) which allows us to deal with the joint matrix-variate density rather than the density of the eigenvalues, we derive new exact expressions for the cumulative distribution
functions (c.d.f.s) of the minimum and maximum eigenvalues of correlated complex
non-central Wishart and correlated gamma-Wishart random matrices. In both cases, whilst a general theory which accounts for all matrix dimensions and distributional parameters appears intractable, we are able to derive solutions for various important scenarios.  Specifically, for correlated non-central Wishart matrices, we derive expressions for the minimum eigenvalue c.d.f.s when the matrix dimensionality and the number of degrees of freedom are equal. We also derive results for some specific scenarios for which they are not equal, and present some analogous results for the maximum eigenvalue c.d.f.  For tractability, we focus on matrices with rank-one non-centrality parameter, which is practical for various applications; most notably, MIMO communication systems with a direct line-of-sight path between the transmitter and receiver. Given the overwhelming complexity of the underlying joint eigenvalue distribution, these extreme eigenvalue c.d.f.\ expressions are remarkably simple, involving infinite series with fast convergence, and they can be easily and efficiently computed.

For the case of gamma-Wishart matrices, we focus on scenarios for which the underlying matrix-variate gamma has an integer parameter.  The implications of this assumption from a telecommunications engineering perspective are discussed in \cite{Alfano}.  As for the non-central Wishart case, we derive exact expressions for the minimum and maximum eigenvalue distributions for certain gamma-Wishart particularizations.

Whilst
previous expressions pertaining to the non-central Wishart case have been reported in \cite{Davis1979,Rathna,Mathai};
those are very complicated, involving either
infinite series' with inner summations over partitions with each term involving invariant zonal polynomials (c.f.\ Section
\ref{sec:Prelim}), or infinite series with special functions of matrix arguments \cite{Davis1979,Mathai}. As such, those previous results have limited utility from a numerical computation perspective.


\section{Preliminaries and New Matrix Integrals} \label{sec:Prelim}
\subsection{Preliminaries}
In this section, we provide some preliminary results and definitions
in random matrix theory which will be useful in the subsequent
derivations. The following notation is used throughout the paper. Matrices are
represented as uppercase bold-face, and vectors by lowercase
bold-face. The superscript $(\cdot)^H$ indicates the
Hermitian-transpose. $\mathbf{I}_p$
denotes a $p\times p$ identity matrix. We use $|\cdot|$ to represent
the determinant of a square matrix, $\text{tr}(\cdot)$ to represent
trace, and $\text{etr}(\cdot)$ stands for
$\exp\left(\text{tr}(\cdot)\right)$. The set of complex Hermitian $m\times m$ matrices are denoted by $\mathcal{H}_m$ and the set of Hermitian positive definite matrices are denoted as $\mathcal{H}_m^+$.  For $\mathbf{A},\mathbf{B}\in\mathcal{H}_m$, $\mathbf{A}>0$ is used to indicate the positive definiteness, and $\mathbf{A}>\mathbf{B}$ denotes $\mathbf{A}-\mathbf{B}\in\mathcal{H}_m^+$. $\mathbf{A}\geq 0$ is used
to indicate non-negativeness. $\mathbf{A}_{j,k}$ represents the $j,k$th element of matrix $\mathbf{A}$. $\left\lceil x\right\rceil$ is the ceiling function, defined as $\left\lceil x\right\rceil=\min\left\{n\in \mathbb{Z}|n\geq x\right\}$. Finally, the $k$th derivative of function $f(y)$ is represented as $f^{(k)}(y)$ for all $k\in\mathbb{Z}^+$, and with $f^{(0)}(y):=f(y)$.
\begin{definition}
The generalized hypergeometric function of one matrix argument can
be defined as\footnote{The convergence of the infinite zonal series
is discussed in \cite{Muirhead,Rathna}.}
\begin{equation}
\label{hypo}
{}_p\widetilde{F}_q\left(a_1,a_2,\ldots,a_p;b_1,b_2,\ldots,b_q;\mathbf{Y}\right)=\sum_{k=0}^\infty
\sum_{\kappa}
\frac{[a_1]_\kappa[a_2]_\kappa\cdots[a_p]_\kappa}{[b_1]_\kappa[b_2]_\kappa\cdots[b_q]_\kappa}
\frac{C_{\kappa}(\mathbf{Y})}{k!}
\end{equation}
where $\mathbf{Y}\in \mathcal{H}_m$, $[a]_\kappa=\displaystyle \prod_{j=1}^m(a-j+1)_{k_j}$,
$\kappa=\left(k_1,k_2,\ldots,k_m\right)$ is a partition of $k$ such that $k_1\geq k_2\geq\ldots\geq k_m\geq 0$ and $\sum_{i=1}^mk_i=k$, and $(a)_k=a(a+1)\cdots (a+k-1)$. Also, the complex zonal polynomial $C_{\kappa}(\mathbf{Y})$ is defined in \cite{James1964}.
\end{definition}
\begin{remark}
Note that the infinite zonal polynomial expansion given in (\ref{hypo}) reduces to a finite series if at least one of the $a_i$s is a negative integer. As such, when $N\in\mathbb{Z}^+$ we have
\begin{equation}
\label{hyptrk}
{}_p\widetilde{F}_q\left(-N,a_2,\ldots,a_p;b_1,b_2,\ldots,b_q;\mathbf{Y}\right)=\sum_{k=0}^{mN}
\widetilde \sum_{\kappa}
\frac{[-N]_\kappa[a_2]_\kappa\cdots[a_p]_\kappa}{[b_1]_\kappa[b_2]_\kappa\cdots[b_q]_\kappa}
\frac{C_{\kappa}(\mathbf{Y})}{k!}
\end{equation}
where $\widetilde \sum_{\kappa}$ denotes the summation over all
partitions $\kappa=\left(k_1,k_2,\ldots,k_m\right)$ of $k$ with
$k_1\leq N$.
 \end{remark}
For more properties of zonal polynomials, see
\cite{James1968,Takemura,Caro}.
\begin{definition}{\bf{(Non-Central Wishart Distribution)}}
Let $\mathbf{X}$ be an $n\times m$ ($n \geq m$) random matrix distributed as
$\mathcal{CN}_{n,m}\left(\boldsymbol{\Upsilon},\mathbf{I}_n\otimes
\boldsymbol{\Sigma }\right)$, where $\boldsymbol{\Sigma}\in
\mathcal{H}_m^+$ and $\boldsymbol{\Upsilon}\in
\mathbb{C}^{n\times m}$. Then $\mathbf{W}=\mathbf{X}^H\mathbf{X}\in\mathcal{H}_m^+$
has a complex non-central Wishart
distribution
$\mathcal{W}_m\left(n,\boldsymbol{\Sigma},\boldsymbol{\Theta}\right)$
with density function  \cite{James1964}
\begin{equation}
\label{wishart}
\begin{split}
f_{\mathbf{W}}\left(\mathbf{W}\right)& =
\frac{\mathrm{etr}\left(-\boldsymbol{\Theta}\right)|\mathbf{W}|^{n-m}}{\tilde{\Gamma}_m(n)|\boldsymbol{\Sigma|}^{n}}
\mathrm{etr}\left(-\boldsymbol{\Sigma}^{-1}\mathbf{W}\right){}_0\widetilde{F}_1\left(n;\boldsymbol{\Theta}\boldsymbol{\Sigma}^{-1}\mathbf{W}\right)
\end{split}
\end{equation}
where $\boldsymbol{\Theta}=\boldsymbol{\Sigma}^{-1}\boldsymbol{\Upsilon}^H\boldsymbol{\Upsilon}$
is the \textit{non-centrality} parameter and
$\tilde{\Gamma}_m(\cdot)$ represents the complex multivariate gamma
function defined as
\begin{equation*}
\tilde{\Gamma}_m(n)\stackrel{ \Delta}{=}\pi^{\frac{m(m-1)}{2}}\prod_{j=1}^{m}\Gamma(n-j+1)
\end{equation*}
with $\Gamma(\cdot)$ denoting the classical gamma function.
\end{definition}

\begin{definition}{\bf{(Matrix Variate Gamma Distribution)}}
Let $\alpha\geq m$ and $\boldsymbol{\Omega}\in\mathcal{H}_m^+$. The random matrix $\mathbf{M}\in\mathcal{H}_m^+$ has a matrix-variate complex gamma distribution $\Gamma_m\left(\alpha,\boldsymbol{\Omega}\right)$ if its density is \cite[Def. 6.3]{Mathai2}.
\end{definition}
\begin{definition}{\bf{(Gamma-Wishart Distribution)}}
Let us construct an $n\times m$ matrix $\widetilde{\mathbf{X}}$ such that
\begin{equation}
\widetilde{\mathbf{X}}=\widehat{\mathbf{X}}+\overline{\mathbf{X}}
\end{equation}
where $\widehat{\mathbf{X}}\sim\mathcal{CN}_{n,m}\left(\mathbf{0},\mathbf{I}_n\otimes
\boldsymbol{\Sigma }\right) $ and $\overline{\mathbf{X}}^H\overline{\mathbf{X}}\sim \Gamma_m\left(\alpha,\boldsymbol{\Omega}\right)$ are independent. Then  $\mathbf{V}=\widetilde{\mathbf{X}}^H\widetilde{\mathbf{X}}\in\mathcal{H}_m^+$ follows a gamma-Wishart distribution $\Gamma {\cal W}_m (n, \alpha, \boldsymbol{\Sigma}, \boldsymbol{\Omega})$ given by \cite{Alfano}
\begin{equation}
\label{Gram}
f_{\mathbf{V}}(\mathbf{V})=\frac{\mathrm{etr}\left(-\boldsymbol{\Sigma}^{-1}\mathbf{V}\right)|\mathbf{V}|^{n-m}|\boldsymbol{\Omega}|^{\alpha}}
{\tilde{\Gamma}_m(n)|\boldsymbol{\Sigma}|^{n}\left|\boldsymbol{\Sigma}^{-1}+\boldsymbol{\Omega}\right|^{\alpha}}
{}_1\widetilde{F}_1\left(\alpha;n;\boldsymbol{\Sigma}^{-1}\left(\boldsymbol{\Sigma}^{-1}+\boldsymbol{\Omega}\right)^{-1}
\boldsymbol{\Sigma}^{-1}\mathbf{V}\right).
\end{equation}
 Note that for $\alpha=n$, (\ref{Gram}) reduces to $\mathcal{W}_{m}\left(n,\boldsymbol{\Sigma}+\boldsymbol{\Omega}^{-1}\right)$.
\end{definition}

In addition to zonal polynomials, non-central distributional
problems in multivariate statistics commonly give rise to other
classes of invariant polynomials \cite{Chikuse1986}.

The next lemma presents the joint eigenvalue distributions of gamma-Wishart matrix, in terms of invariant polynomials defined in \cite{Davis1979,Davis1980,Rathna}. The proof of this lemma follows similar steps to the proof of the correlated non-central Wishart joint eigenvalue density, $g_{\boldsymbol{\Lambda}}\left(\boldsymbol{\Lambda}\right)$,  in \cite[Eq. 5.4]{Rathna} and thus
omitted.
\begin{lemma} \label{lem:eigwgamma}
The joint density of the ordered eigenvalues ${\lambda}_1>{\lambda}_2> \cdots >{\lambda}_m>0$, of the matrix $\mathbf{V}$ in (\ref{Gram}) is given by
\begin{align}
\label{eigenpdfwg}
g_{\widetilde{\boldsymbol{\Lambda}}}\left({\boldsymbol{\Lambda}}\right) &=
\frac{\pi^{m(m-1)}|\boldsymbol{\Omega}|^\alpha}{\tilde{\Gamma}_m(n)\tilde{\Gamma}_m(m)|\boldsymbol{\Sigma}|^{n}
|\boldsymbol{\Omega}+\boldsymbol{\Sigma}^{-1}|^\alpha}
\prod_{k=1}^{m}{\lambda}_k^{n-m}\prod_{k<l}^m\left({\lambda}_k-{\lambda}_l\right)^2\nonumber\\
& \times
\sum_{k,s=0}^{\infty}\sum_{\kappa,\sigma;\phi\in\kappa.\sigma}
\frac{[\alpha]_\sigma C_{\phi}^{\kappa,\sigma}\left(-\boldsymbol{\Sigma}^{-1},\boldsymbol{\Sigma}^{-1}\left(\boldsymbol{\Omega}+\boldsymbol{\Sigma}^{-1}\right)^{-1}
\boldsymbol{\Sigma}^{-1} \right)
C_{\phi}^{\kappa,\sigma}\left({\boldsymbol{\Lambda}},{\boldsymbol{\Lambda}}\right)}{k!s![n]_{\sigma}C_{\phi}\left(\mathbf{I}_m\right)}
\end{align}
where ${\boldsymbol{\Lambda}}$ is a diagonal matrix containing the
eigenvalues of $\mathbf{V}$ along the main diagonal.
\end{lemma}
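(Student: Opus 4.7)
The plan is to parallel the derivation of \cite[Eq.~5.4]{Rathna} for the correlated complex non-central Wishart eigenvalue density, adapting each step so as to accommodate the extra ${}_1\widetilde{F}_1$ factor appearing in (\ref{Gram}). I would begin from the matrix density $f_{\mathbf{V}}(\mathbf{V})$ and apply the spectral decomposition $\mathbf{V}=\mathbf{U}\boldsymbol{\Lambda}\mathbf{U}^H$ with $\mathbf{U}\in\mathcal{U}(m)$ and $\boldsymbol{\Lambda}=\mathrm{diag}(\lambda_1,\ldots,\lambda_m)$, $\lambda_1>\cdots>\lambda_m>0$. The standard Jacobian on $\mathcal{H}_m^+$ produces the factor $\pi^{m(m-1)}\prod_{k<l}(\lambda_k-\lambda_l)^2/\tilde{\Gamma}_m(m)$ together with the normalized invariant measure $[d\mathbf{U}]$ on $\mathcal{U}(m)/\mathcal{T}^m$, where $\mathcal{T}^m$ denotes the subgroup of diagonal phase matrices. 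After substitution, the eigenvalue density is obtained by marginalizing $\mathbf{U}$, so the task reduces to evaluating
$$\mathcal{I}(\boldsymbol{\Lambda})=\int_{\mathcal{U}(m)/\mathcal{T}^m}\mathrm{etr}\!\left(-\boldsymbol{\Sigma}^{-1}\mathbf{U}\boldsymbol{\Lambda}\mathbf{U}^H\right){}_1\widetilde{F}_1\!\left(\alpha;n;\mathbf{A}\mathbf{U}\boldsymbol{\Lambda}\mathbf{U}^H\right)[d\mathbf{U}]$$
with $\mathbf{A}:=\boldsymbol{\Sigma}^{-1}(\boldsymbol{\Omega}+\boldsymbol{\Sigma}^{-1})^{-1}\boldsymbol{\Sigma}^{-1}$.

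Next, I would expand both factors in zonal polynomial series. The exponential trace admits $\mathrm{etr}(-\boldsymbol{\Sigma}^{-1}\mathbf{U}\boldsymbol{\Lambda}\mathbf{U}^H)=\sum_{k=0}^{\infty}\sum_{\kappa}C_\kappa(-\boldsymbol{\Sigma}^{-1}\mathbf{U}\boldsymbol{\Lambda}\mathbf{U}^H)/k!$, while the ${}_1\widetilde{F}_1$ factor is expanded using its defining series (\ref{hypo}). Justifying termwise interchange of integration and summation via the absolute convergence results in \cite{Muirhead,Rathna}, one is left with group integrals of the form $\int C_\kappa(-\boldsymbol{\Sigma}^{-1}\mathbf{U}\boldsymbol{\Lambda}\mathbf{U}^H)\,C_\sigma(\mathbf{A}\mathbf{U}\boldsymbol{\Lambda}\mathbf{U}^H)[d\mathbf{U}]$ indexed by a pair of partitions $(\kappa,\sigma)$, weighted by $[\alpha]_\sigma/(k!\,s!\,[n]_\sigma)$.

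The crux is to evaluate these product integrals, for which I would invoke the complex analogue of Davis's invariant-polynomial identity \cite{Davis1979,Davis1980}, namely
$$\int_{\mathcal{U}(m)/\mathcal{T}^m}\!C_\kappa(\mathbf{B}_1\mathbf{U}\boldsymbol{\Lambda}\mathbf{U}^H)\,C_\sigma(\mathbf{B}_2\mathbf{U}\boldsymbol{\Lambda}\mathbf{U}^H)[d\mathbf{U}]=\sum_{\phi\in\kappa\cdot\sigma}\frac{C_\phi^{\kappa,\sigma}(\mathbf{B}_1,\mathbf{B}_2)\,C_\phi^{\kappa,\sigma}(\boldsymbol{\Lambda},\boldsymbol{\Lambda})}{C_\phi(\mathbf{I}_m)}.$$
Applied with $\mathbf{B}_1=-\boldsymbol{\Sigma}^{-1}$ and $\mathbf{B}_2=\mathbf{A}$, this converts $\mathcal{I}(\boldsymbol{\Lambda})$ into precisely the quadruple sum over $k,s,(\kappa,\sigma),\phi$ displayed in (\ref{eigenpdfwg}). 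Combining with the pre-factor from (\ref{Gram}) and the Jacobian then yields the stated density.

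The principal obstacle is this invariant-polynomial step: one must confirm that the normalization convention for $[d\mathbf{U}]$ agrees with the one for which the identity is stated in the complex setting, and that the summation convention $\phi\in\kappa\cdot\sigma$ in (\ref{eigenpdfwg}) coincides with the support of the symmetrized polynomials defined in \cite{Davis1980,Chikuse1986}. Once these conventions are fixed, the remainder is essentially the same bookkeeping as in the non-central Wishart case \cite[Eq.~5.4]{Rathna}, which is why the authors elect to omit the computation.
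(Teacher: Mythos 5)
Your proposal is correct and follows the same approach the paper intends. The paper explicitly omits the proof, stating that it ``follows similar steps to the proof of the correlated non-central Wishart joint eigenvalue density\ldots in [Eq.~5.4]{Rathna},'' and your proposal is the natural adaptation of that argument: spectral decomposition with the standard complex Jacobian producing $\pi^{m(m-1)}\prod_{k<l}(\lambda_k-\lambda_l)^2/\tilde{\Gamma}_m(m)$, zonal expansion of both the $\mathrm{etr}(-\boldsymbol{\Sigma}^{-1}\mathbf{U}\boldsymbol{\Lambda}\mathbf{U}^H)$ factor and the ${}_1\widetilde{F}_1$ (the latter simply contributing the extra $[\alpha]_\sigma/[n]_\sigma$ weight relative to the ${}_0\widetilde{F}_1$ in the non-central Wishart case), and then Davis's invariant-polynomial unitary group integral to produce the $C_\phi^{\kappa,\sigma}(\cdot,\cdot)C_\phi^{\kappa,\sigma}(\boldsymbol{\Lambda},\boldsymbol{\Lambda})/C_\phi(\mathbf{I}_m)$ structure. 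Your flagging of the normalization convention for the Haar measure and the meaning of $\phi\in\kappa\cdot\sigma$ is apt; those are exactly the conventions one must check against Davis/Chikuse/Ratnarajah when filling in the omitted computation, but once pinned down the derivation is routine.
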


The following technical lemma is proved in \ref{ap:A}.

\begin{lemma}\label{lem:factorize}
Let $x_1,x_2$ be the two distinct eigenvalues of $\mathbf{X}\in\mathcal{H}_2^+$.
Then, for all $n\in\mathbb{Z}^+$,
\begin{equation}
\frac{x_1^n-x_2^n}{x_1-x_2}= \sum_{i=0}^{\left\lceil
\frac{n-2}{2}\right\rceil}
(-1)^i4^ie_i^n|\mathbf{X}|^i\mathrm{tr}^{n-1-2i}(\mathbf{X})
\end{equation}
where $e_i^n$ denotes the $i$th elementary symmetric function of the
parameters
\begin{equation}
\mathcal{S}^n := \left\{\cos^2 \left(\frac{\pi}{n}\right),\cos^2
\left(\frac{2\pi}{n}\right),\ldots,\cos^2\left(
\left\lceil\frac{n-2}{2}\right\rceil\frac{\pi}{n}\right)\right\}.
\end{equation}
\end{lemma}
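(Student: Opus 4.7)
The plan is to exploit the factorization of $x_1^n - x_2^n$ over the $n$th roots of unity and then pair up complex conjugate roots to obtain real quadratic factors that depend only on $\mathrm{tr}(\mathbf{X})=x_1+x_2$ and $|\mathbf{X}|=x_1x_2$. Concretely, writing $\omega=e^{2\pi i/n}$, one has
\begin{equation*}
\frac{x_1^n-x_2^n}{x_1-x_2}=\prod_{k=1}^{n-1}\bigl(x_1-\omega^k x_2\bigr).
\end{equation*}
Pairing the factor indexed by $k$ with the one indexed by $n-k$ gives
\begin{equation*}
(x_1-\omega^k x_2)(x_1-\omega^{-k}x_2)=(x_1+x_2)^2-4\cos^2\!\bigl(\tfrac{k\pi}{n}\bigr)x_1x_2=\mathrm{tr}^2(\mathbf{X})-4\cos^2\!\bigl(\tfrac{k\pi}{n}\bigr)|\mathbf{X}|,
\end{equation*}
using $1+\cos(2k\pi/n)=2\cos^2(k\pi/n)$.

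Next I would split into cases by the parity of $n$. For $n$ odd, all $n-1$ factors pair up into $(n-1)/2$ quadratic factors. For $n$ even, the index $k=n/2$ yields the unpaired linear factor $x_1+x_2=\mathrm{tr}(\mathbf{X})$, while the remaining $n-2$ factors pair into $n/2-1$ quadratics. Since $\lceil(n-2)/2\rceil$ equals $(n-1)/2$ for odd $n$ and $n/2-1$ for even $n$, both cases can be written uniformly as
\begin{equation*}
\frac{x_1^n-x_2^n}{x_1-x_2}=\mathrm{tr}^{\varepsilon_n}(\mathbf{X})\prod_{k=1}^{\lceil(n-2)/2\rceil}\!\Bigl(\mathrm{tr}^2(\mathbf{X})-4\cos^2\!\bigl(\tfrac{k\pi}{n}\bigr)|\mathbf{X}|\Bigr),
\end{equation*}
with $\varepsilon_n=0$ for $n$ odd and $\varepsilon_n=1$ for $n$ even.

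Finally, I would expand the product using the standard identity $\prod_{k=1}^{M}(A-c_k B)=\sum_{i=0}^{M}(-1)^i e_i(c_1,\dots,c_M)A^{M-i}B^i$ with $A=\mathrm{tr}^2(\mathbf{X})$, $B=4|\mathbf{X}|$, and $c_k=\cos^2(k\pi/n)$. Absorbing the possible leading $\mathrm{tr}(\mathbf{X})$ factor raises the exponent of $\mathrm{tr}(\mathbf{X})$ by one in the even case, and in both cases the exponent becomes $n-1-2i$, so the uniform identity
\begin{equation*}
\frac{x_1^n-x_2^n}{x_1-x_2}=\sum_{i=0}^{\lceil(n-2)/2\rceil}(-1)^i4^i e_i^n|\mathbf{X}|^i\mathrm{tr}^{n-1-2i}(\mathbf{X})
\end{equation*}
emerges, matching the claim.

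The computation itself is elementary; the only genuine obstacle is bookkeeping the parity of $n$ so that the unpaired factor $x_1+x_2$ (present only when $n$ is even) gets absorbed cleanly into the single summation, and verifying that the upper index $\lceil(n-2)/2\rceil$ is correct in both parities. The distinctness assumption on $x_1,x_2$ is used only so that division by $x_1-x_2$ is literal; the resulting polynomial identity is valid on all of $\mathcal{H}_2^+$ by continuity.
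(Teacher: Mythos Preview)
Your proof is correct and follows essentially the same approach as the paper: factor $(x_1^n-x_2^n)/(x_1-x_2)$ into real quadratics of the form $\mathrm{tr}^2(\mathbf{X})-4\cos^2(j\pi/n)|\mathbf{X}|$ (with an extra $\mathrm{tr}(\mathbf{X})$ factor when $n$ is even), then expand the product via the elementary symmetric function identity. The paper states the quadratic factorization directly, whereas you derive it by pairing conjugate $n$th roots of unity, but this is the same argument in slightly more detail.
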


\subsection{New Matrix Integrals}
Here we present some new matrix integral results which will be
important in the derivations of the extreme eigenvalue
distributions, given in the following sections.

 \begin{lemma}\label{lem:1f1}
 Let $\mathbf{A}\in\mathcal{H}_2^+$ and $\mathbf{B}\in\mathcal{H}_2$ with $\mathbf{B}\geq 0$. Also,
 define  $x_1(y)$ and $x_2(y)$ as the eigenvalues of $\mathbf{A}+\mathbf{B}y$. Then, $\forall p\in\mathbb{Z}^+_0$ and
 $\Re(a)>1$,
 \begin{equation}
 \label{incomgammaint}
 \int_{\mathbf{0}}^{\mathbf{I}_2}
 |\mathbf{X}|^{a-2}
 \mathrm{etr}\left(\mathbf{AX}\right)
 \mathrm{tr}^{p}\left(\mathbf{BX}\right)d\mathbf{X}=\frac{\tilde \Gamma_2(a)\tilde \Gamma_2(2)}{\tilde \Gamma_2(a+2)}
 \phi^{(p)}_{\mathbf{A},\mathbf{B},a}(0)
 \end{equation}
 where $ \phi^{(p)}_{\mathbf{A},\mathbf{B},a}(0)$ is calculated
 recursively via
 \begin{equation}
 \label{sub1}
 \phi^{(p)}_{\mathbf{A},\mathbf{B},a}(0)=\frac{1}{h_{\mathbf{A},\mathbf{B}}(0)}
 \left(\Delta^{(p)}_{\mathbf{A},\mathbf{B},a}(0)-\sum_{j=1}^p\binom{p}{j}\phi^{(p-j)}_{\mathbf{A},\mathbf{B},a}(0)h^{(j)}_{\mathbf{A},\mathbf{B}}(0)\right)
 \end{equation}
 with initial condition
   \begin{equation}
  \label{initialcond}
  \phi^{(0)}_{\mathbf{A},\mathbf{B},a}(0)=\phi_{\mathbf{A},\mathbf{B},a}(0)=\frac{\Delta_{\mathbf{A},\mathbf{B},a}(0)}{x_1(0)-x_2(0)}\;.
  \end{equation}
Here,
 \begin{align}
 & \Delta_{\mathbf{A},\mathbf{B},a}(y)= x_1(y){}_1F_1\left(a;a+2;x_1(y)\right) {}_1F_1\left(a-1;a+1;x_2(y)\right) \nonumber \\
 & \hspace*{3cm} -  x_2(y){}_1F_1\left(a;a+2;x_2(y)\right) {}_1F_1\left(a-1;a+1;x_1(y)\right)
 \end{align}
and
  \begin{equation}
 h^{(j)}_{\mathbf{A},\mathbf{B}}(0)=x^{(j)}_1(0)-x^{(j)}_2(0),
 \end{equation}
with
 \begin{equation}
 \label{x1def}
 x^{(j)}_1(0)=\left\{\begin{array}{cl}
 \displaystyle \frac{x_1(0)\mathrm{tr}(\mathbf{B})-|\mathbf{A}|\mathrm{tr}\left(\mathbf{BA}^{-1}\right)}{x_1(0)-x_2(0)} & \mathrm{if}\; j=1\\
 \displaystyle \frac{2\left(x_1^{(1)}(0)x_2^{(1)}(0)-|\mathbf{B}|\right)}{x_1(0)-x_2(0)} & \mathrm{if}\; j=2\\
 \displaystyle \frac{\sum_{k=1}^{j-1}\binom{j}{k}x_1^{(j-k)}(0)x_2^{(k)}(0)}{x_1(0)-x_2(0)} & \mathrm{if}\; j\geq 3\;,
  \end{array}\right.
  \end{equation}
  \begin{equation}
  \label{x2def}
  x^{(j)}_2(0)=\left\{\begin{array}{cl}
 \displaystyle \frac{|\mathbf{A}|\mathrm{tr}\left(\mathbf{BA}^{-1}\right)-x_2(0)\mathrm{tr}(\mathbf{B})}{x_1(0)-x_2(0)} & \mathrm{if}\; j=1\\
 -x_1^{(j)}(0)& \mathrm{if}\; j\geq 2\; . \\
   \end{array}\right.
  \end{equation}

 \end{lemma}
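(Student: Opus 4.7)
The plan is to introduce an auxiliary scalar parameter $y$ so that the power $\mathrm{tr}^p(\mathbf{B}\mathbf{X})$ in the integrand is produced by repeated differentiation of a matrix-exponential kernel. Concretely, I would set
\begin{equation*}
F(y):=\int_{\mathbf{0}}^{\mathbf{I}_2} |\mathbf{X}|^{a-2}\,\mathrm{etr}\bigl((\mathbf{A}+\mathbf{B}y)\mathbf{X}\bigr)\,d\mathbf{X},
\end{equation*}
and observe that $\partial_y\,\mathrm{etr}((\mathbf{A}+\mathbf{B}y)\mathbf{X})=\mathrm{tr}(\mathbf{B}\mathbf{X})\,\mathrm{etr}((\mathbf{A}+\mathbf{B}y)\mathbf{X})$, so the desired integral equals $F^{(p)}(0)$. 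Next I would identify $F(y)$ with a confluent hypergeometric function of matrix argument via the standard Euler-type integral representation: with $m=2$ and the upper parameter $c=a+2$, the factor $|\mathbf{I}_2-\mathbf{X}|^{c-a-m}$ collapses, giving
\begin{equation*}
F(y)=\frac{\tilde{\Gamma}_2(a)\tilde{\Gamma}_2(2)}{\tilde{\Gamma}_2(a+2)}\,{}_1\widetilde{F}_1\bigl(a;a+2;\mathbf{A}+\mathbf{B}y\bigr),
\end{equation*}
which already reproduces the prefactor appearing in (\ref{incomgammaint}).

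The next step is to reduce the matrix-argument $_1\widetilde{F}_1$ to scalar $_1F_1$'s. Because the integrand on $\mathcal{H}_2^+$ depends on $\mathbf{A}+\mathbf{B}y$ only through its eigenvalues, a unitary diagonalization inside the integral, combined with the classical $2\times 2$ Khatri/Karlin--McGregor splitting identity for hypergeometric functions of matrix argument, yields an expression of the form
\begin{equation*}
{}_1\widetilde{F}_1(a;a+2;\mathbf{A}+\mathbf{B}y)\;=\;\mathrm{const}\cdot\frac{\Delta_{\mathbf{A},\mathbf{B},a}(y)}{x_1(y)-x_2(y)},
\end{equation*}
whose numerator is precisely the $2\times 2$ determinantal combination of $_1F_1(a;a+2;x_i)$ and $_1F_1(a-1;a+1;x_i)$, weighted by $x_i$, stated in the lemma. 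After absorbing the constant into the already-extracted gamma ratio, one defines $\phi_{\mathbf{A},\mathbf{B},a}(y)$ with the initial value (\ref{initialcond}).

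The recursion (\ref{sub1}) then follows mechanically: writing $\phi(y)\,h(y)=\Delta(y)$ with $h(y):=x_1(y)-x_2(y)$, differentiating $p$ times via Leibniz, evaluating at $y=0$, and solving for $\phi^{(p)}(0)$ produces exactly the displayed formula. To make everything explicit, I would compute $x_1^{(j)}(0)$ and $x_2^{(j)}(0)$ from the characteristic polynomial of $\mathbf{A}+\mathbf{B}y$: the identities
\begin{equation*}
x_1(y)+x_2(y)=\mathrm{tr}(\mathbf{A})+y\,\mathrm{tr}(\mathbf{B}),\qquad x_1(y)x_2(y)=|\mathbf{A}|+y\,|\mathbf{A}|\mathrm{tr}(\mathbf{B}\mathbf{A}^{-1})+y^2|\mathbf{B}|,
\end{equation*}
show the sum is linear and the product quadratic in $y$, so $x_1^{(j)}+x_2^{(j)}=0$ for $j\ge 2$. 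Applying Leibniz to the product relation then pins down $x_1^{(j)}(0)$ (and hence $x_2^{(j)}(0)$) recursively, reproducing the three cases in (\ref{x1def})--(\ref{x2def}).

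The main obstacle is the splitting-identity step: one must identify the precise $2\times 2$ determinantal reduction of $_1\widetilde{F}_1(a;a+2;\cdot)$ that produces the particular mixture of parameter shifts $(a,a+2)$ and $(a-1,a+1)$ appearing in $\Delta_{\mathbf{A},\mathbf{B},a}$, and track the scalar prefactors carefully so that they collapse into $\tilde{\Gamma}_2(a)\tilde{\Gamma}_2(2)/\tilde{\Gamma}_2(a+2)$ with no residue. Once that identification is in place, the remaining work—Leibniz for the recursion and differentiating a quadratic polynomial for the eigenvalue derivatives—is routine.
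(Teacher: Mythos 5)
Your proposal follows essentially the same path as the paper's Appendix B: both substitute $\mathbf{A}+\mathbf{B}y$ into the Euler-type integral representation of ${}_1\widetilde{F}_1(a;a+2;\cdot)$, compare Taylor coefficients in $y$ to get (\ref{incomgammaint}), write the $2\times 2$ matrix-argument function as a ratio $\Delta_{\mathbf{A},\mathbf{B},a}(y)/h_{\mathbf{A},\mathbf{B}}(y)$ of scalar ${}_1F_1$ data over the Vandermonde, and then obtain the recursion by Leibniz and the eigenvalue derivatives from the characteristic-polynomial relations. The determinantal reduction you flag as the main obstacle is exactly the step the paper resolves by citing the Orlov/Gross determinant formula for confluent hypergeometric functions of a Hermitian matrix argument, so your outline is sound and complete once that reference is supplied.
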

\begin{proof}
See \ref{ap:B}.
\begin{lemma} \label{lem:trace}
Let $\mathbf{A}\in\mathcal{H}_m^+$ and let $\mathbf{R}\in\mathcal{H}_m$ with unit rank. Then, for $t\in \mathbb{Z}^{+}_0$ and
$\Re(a)>m-1$,
\begin{align}
\label{theq1}
\int_{\mathbf{X}\in\mathcal{H}_m^+}
\mathrm{etr}& \left(-\mathbf{A}\mathbf{X}\right)\mathrm{tr}\left(\mathbf{X}\right)|\mathbf{X}|^{a-m}\mathrm{tr}^t\left(\mathbf{R}\mathbf{X}\right)d\mathbf{X} = \nonumber\\
&(a)_t\tilde \Gamma_m(a)\mathrm{tr}^{t}\left(\mathbf{R}\mathbf{A}^{-1}\right)|\mathbf{A}|^{-a}
\left(t\; \frac{\mathrm{tr}\left(\mathbf{R}\left(\mathbf{A}^{-1}\right)^{2}\right)}{\mathrm{tr}\left(\mathbf{R}\mathbf{A}^{-1}\right)}+a\;\mathrm{tr}(\mathbf{A}^{-1})\right).
\end{align}
\end{lemma}
\begin{proof}
See \ref{ap:C}.

When the matrices are of size $2\times 2$, we can obtain the
following general result:
\begin{lemma}\label{lem:2by2tracep}
Let $\mathbf{A}\in \mathcal{H}_2^+$ and let $\mathbf{R}\in\mathcal{H}_2$ with unit rank. Then, for $p$,
$t\in\mathbb{Z}^{+}_0$ and $\Re(a)>1$,
\begin{align}
\int_{\mathbf{X}\in\mathcal{H}_2^+}
\mathrm{etr}& \left(-\mathbf{A}\mathbf{X}\right)\mathrm{tr}^p\left(\mathbf{X}\right)
\left|\mathbf{X}\right|^{a-2}\mathrm{tr}^t\left(\mathbf{R}\mathbf{X}\right)d\mathbf{X} = \nonumber\\
&p!\frac{(a)_t\tilde \Gamma_2(a)}{|\mathbf{A}|^{a+\frac{p}{2}}}\sum_{k=0}^{\min(p, t)}\frac{(-1)^k\binom{t}{k}}{|\mathbf{A}|^{\frac{k}{2}}}\mathrm{tr}^{t-k}\left(\mathbf{R}\mathbf{A}^{-1}\right)\mathrm{tr}^{k}\left(\mathbf{R}\right)
\mathcal{C}^{a+t}_{p-k}\left(\frac{\mathrm{tr}\left(\mathbf{A}\right)}{2\sqrt{\left|\mathbf{A}\right|}}\right)
\end{align}
where $\mathcal{C}_n^\nu(\cdot)$ denotes an ultraspherical
(Gegenbauer) polynomial.
\end{lemma}
\begin{proof}
See \ref{ap:D}.

\begin{lemma}\label{lem:3by4rank1}
Let $\mathbf{A}\in\mathcal{H}_3^+$ and let $\mathbf{R}(\geq 0)\in\mathcal{H}_3^+$ with unit rank. Then, for $t\in
\mathbb{Z}^{+}_0$,
\begin{align}
\label{110thoerem}
\int_{\mathbf{X}\in\mathcal{H}_3^+}
\mathrm{etr}& \left(-\mathbf{A}\mathbf{X}\right)
\mathrm{tr}^t(\mathbf{RX})C_{1,1,0}(\mathbf{X})d\mathbf{X} =\nonumber\\
&
\tilde \Gamma_3(4)|\mathbf{A}|^{-4}\left(
(4)_t \mathrm{tr}^t\left(\mathbf{R}\mathbf{A}^{-1}\right)\mathrm{tr}(\mathbf{A})+t(4)_{t-1}
\mathrm{tr}^{t-1}\left(\mathbf{R}\mathbf{A}^{-1}\right)\mathrm{tr}(\mathbf{R})\right).
\end{align}
\end{lemma}
\begin{proof}
See \ref{ap:E}.
\begin{lemma}\label{lem:tracegamma}
Let $\mathbf{A},\mathbf{B}\in\mathcal{H}_2^+$. Then, for $p,t \in \mathbb{Z}^+_0$ and $\Re(a)>1$, we have
\begin{align}
\label{twotracetheo}
\int_{\mathbf{X}\in\mathcal{H}_2^+}
& \mathrm{etr}\left(-\mathbf{A}\mathbf{X}\right)\mathrm{tr}^p\left(\mathbf{BX}\right)  \mathrm{tr}^t\left(\mathbf{X}\right)\left|\mathbf{X}\right|^{a-2}d\mathbf{X}\nonumber\\
 &\quad =p!t!|\mathbf{A}|^{-a}\tilde \Gamma_2(a)\displaystyle
 \sum_{t_1=\left\lceil\frac{t}{2}\right\rceil}^t
 \frac{(a)_{t_1}(a)_{t-t_1}\left(2t_1+1-t\right)}{\left(t_1+1\right)!\left(t-t_1\right)!}
 \displaystyle \sum_{i=0}^{\left\lceil\frac{2t_1-t-1}{2}\right\rceil}\mathcal{B}_{\tau,p,i}
\end{align}
where
\begin{align*}
\mathcal{B}_{\tau,p,i}=\sum_{k=0}^{\min(p,\;\varepsilon_{t_1,i})}
(-1)^{k+i}4^i e_i^{\tau}  \binom{\varepsilon_{t_1,i}}{k}
\mathrm{tr}^{\varepsilon_{t_1,i}-k}\left(\mathbf{A}\right)
& \mathrm{tr}^{k}\left(\mathbf{B}\right)
|\mathbf{A}|^{-\varepsilon_{t_1}-\frac{p-k}{2}}
|\mathbf{B}|^{\frac{p-k}{2}}
\\
& \quad \times
\mathcal{C}^{\varepsilon_{t_1}+a}_{p-k}\left(\frac{\mathrm{tr}
\left(\mathbf{A}^{-1}\mathbf{B}\right)}{2\sqrt{\left|\mathbf{A}^{-1}\mathbf{B}\right|}}\right),
\end{align*}
$\varepsilon_{t_1,i}=2t_1-t-2i,\;\varepsilon_{t_1}=t_1-i$, and
$\tau=\left(t_1,t-t_1\right)$ is a partition of $t$ such that
$\left\lceil\frac{t}{2}\right\rceil\leq t_1\leq t$. Moreover,
$e_i^\tau$ denotes the $i$th elementary symmetric function of the
parameters
\begin{align}
\mathcal{S}^{\tau}& :=
\left\{\cos^2\left(\frac{\pi}{2t_1-t+1}\right),\cos^2\left(\frac{2\pi}{2t_1-t+1}\right),\ldots\ldots\right.\nonumber\\
&\hspace{4.5cm}\quad
\left.\ldots,\cos^2\left(\left\lceil\frac{2t_1-t-1}{2}\right\rceil\frac{\pi}{2t_1-t+1}\right)\right\}.
\end{align}
\end{lemma}
\begin{proof}
See \ref{ap:F}.

Armed with the new results in this section, we are now in a position
to derive the extreme eigenvalue distributions of both correlated
complex non-central Wishart and gamma-Wishart matrices. These key
results are the focus of the following two sections.

\section{New Minimum Eigenvalue Distributions}

In this section, we consider the minimum eigenvalue distribution. To
evaluate this, the most direct approach is to integrate the joint
eigenvalue probability density function (p.d.f.) as follows:
\begin{align}
F_{min} (x) &= 1 - P( \lambda_1 > \cdots > \lambda_m > x ) \nonumber \\
 &= 1 - \int_{\mathcal{D}} g(\boldsymbol{\Lambda}) d \lambda_1 \cdots d \lambda_m
\end{align}
where $\mathcal{D} = \{ x < \lambda_m < \cdots < \lambda_1 \}$ and
$g(\boldsymbol{\Lambda})\in\left\{g_{\boldsymbol{\Lambda}}(\boldsymbol{\Lambda}),
g_{ \boldsymbol{\widetilde \Lambda}}(\boldsymbol{\Lambda})
\right\}$. This direct approach, however, is difficult for two main
reasons: (i) due to the presence of the invariant polynomials in
the joint eigenvalue densities, and (ii) due  the unbounded
upper limit of the integrals which makes term-by-term integration
intractable. To circumvent these complexities, in the following we
adopt an alternative derivation approach based on integrating
directly over the matrix-variate distribution itself, rather than
the distribution of the eigenvalues.

To highlight the approach, consider $\mathbf{Y}\in\mathcal{H}_m^+$ with minimum eigenvalue
$\lambda_{\text{min}}(\mathbf{Y})$ having c.d.f.\
\begin{equation}
\label{cdf}
F_{{min}}(x)=P\left(\lambda_{{min}}(\mathbf{Y})\leq
x\right)=1-P\left(\lambda_{{min}}(\mathbf{Y})>x\right) \; .
\end{equation}
The key idea is to invoke the obvious relation\footnote{This
relation has also been employed previously in
\cite{Davis1979,Muirhead,Const,Mathai,Rathna,Kov1}.}
\begin{equation} \label{eq:minEVRelation}
P\left(\lambda_{{min}}(\mathbf{Y})>x\right) = P\left(\mathbf{Y}>
x\mathbf{I}_m \right)
\end{equation}
which allows one to deal purely with the distribution of
$\mathbf{Y}$, rather than the distribution of its eigenvalues.

\subsection{Correlated Non-Central Wishart Matrices}

For the non-central Wishart scenario, we deal with the matrix
$\mathbf{W}$ with joint density given in (\ref{wishart}). Thus, with
(\ref{eq:minEVRelation}), we have
\begin{align}
\label{matrixcdf} P\left(\lambda_{{min}}(\mathbf{W})>x\right) &=
\int_{\mathbf{W}>x\mathbf{I}_m
}f_{\mathbf{W}}\left(\mathbf{W}\right)d\mathbf{W} \nonumber\\
&=
\frac{\exp\left(-\eta\right)}{\tilde{\Gamma}_m(n)\left|\boldsymbol{\Sigma}\right|^n}
\int_{\mathbf{W}-x\mathbf{I}_m\in\mathcal{H}_m^+}
\left|\mathbf{W}\right|^{n-m}  \text{etr}\left(-\boldsymbol{\Sigma}^{-1}\mathbf{W}\right)\nonumber\\
& \hspace{5cm} \times
{}_0\widetilde{F}_1\left(n;\boldsymbol{\Theta}\boldsymbol{\Sigma}^{-1}\mathbf{W}\right)d\mathbf{W}
\end{align}
where $\eta=\text{tr}(\boldsymbol{\Theta})$. Applying the change of
variables $\mathbf{W}=x\left(\mathbf{I}_m+\mathbf{Y}\right)$ with
$d\mathbf{W}=x^{m^2}d\mathbf{Y}$ yields
\begin{align*}
P\left(\lambda_{{min}}(\mathbf{W})>x\right)&=
\frac{x^{mn}\exp\left(-\eta\right)\text{etr}\left(-x\boldsymbol{\Sigma}^{-1}\right)}{\tilde{\Gamma}_m(n)\left|\boldsymbol{\Sigma}\right|^n}
\int_{\mathbf{Y}\in\mathcal{H}_m^+}
\left|\mathbf{I}_m+\mathbf{Y}\right|^{n-m}\nonumber\\
&  \qquad \quad \times \text{etr}\left(-x\boldsymbol{\Sigma}^{-1}\mathbf{Y}\right)
{}_0\widetilde{F}_1\left(n;x\boldsymbol{\Theta}\boldsymbol{\Sigma}^{-1}\left(\mathbf{I}_m+\mathbf{Y}\right)\right)d\mathbf{Y}.
\end{align*}
It is convenient to now expand the hypergeometric function with its
equivalent zonal polynomial series expansion (\ref{hypo}) to give
\begin{align}
\label{zonal}
& P\left(\lambda_{{min}}(\mathbf{W})>x\right)=
\frac{x^{mn}\exp\left(-\eta\right)\text{etr}\left(-x\boldsymbol{\Sigma}^{-1}\right)}{\tilde{\Gamma}_m(n)\left|\boldsymbol{\Sigma}\right|^n}
\sum_{k=0}^\infty\sum_{\kappa}\frac{1}{k![n]_{\kappa}}
\nonumber\\
& \qquad\times \int_{\mathbf{Y}\in\mathcal{H}_m^+}
\left|\mathbf{I}_m+\mathbf{Y}\right|^{n-m}
\text{etr}\left(-x\boldsymbol{\Sigma}^{-1}\mathbf{Y}\right)
C_{\kappa}\left(x\boldsymbol{\Theta}\boldsymbol{\Sigma}^{-1}\left(\mathbf{I}_m+\mathbf{Y}\right)\right)d\mathbf{Y}
\end{align}
where $\kappa=\left(\kappa_1,....,\kappa_m\right)$ is a partition of $k$ into not more than $m$ parts such that $\kappa_1\geq....\geq\kappa_m\geq 0$ and $\sum_{i}^m\kappa_i=k$.

Observing that $\boldsymbol{\Theta}\boldsymbol{\Sigma}^{-1}$ is
Hermitian non-negative definite with rank one, it can be represented
via its eigen decomposition as
\begin{equation}
\label{eigendecom}
\boldsymbol{\Theta}\boldsymbol{\Sigma}^{-1}=\mu \boldsymbol{\alpha}\boldsymbol{\alpha}^H
\end{equation}
where $\boldsymbol{\alpha}\in\mathbb{C}^{m\times 1}$ and
$\boldsymbol{\alpha}^H\boldsymbol{\alpha}=1$. Recalling that zonal
polynomials depend only on the \emph{eigenvalues} of their matrix
arguments, and noting that
$\boldsymbol{\Theta}\boldsymbol{\Sigma}^{-1}\left(\mathbf{I}_m+\mathbf{Y}\right)$
is also rank one, we can write (\ref{zonal}) with the aid of
(\ref{eigendecom}) as
\begin{align}
\label{zonaltrace}
& P\left(\lambda_{{min}}(\mathbf{W})>x\right)=
\frac{x^{mn}\exp\left(-\eta\right)\text{etr}\left(-x\boldsymbol{\Sigma}^{-1}\right)}{\tilde{\Gamma}_m(n)\left|\boldsymbol{\Sigma}\right|^n}
\sum_{k=0}^\infty\sum_{\kappa}\frac{1}{k![n]_{\kappa}}
\nonumber\\
&\quad\times \int_{\mathbf{Y}\in\mathcal{H}_m^+}
\left|\mathbf{I}_m+\mathbf{Y}\right|^{n-m}
\text{etr}\left(-x\boldsymbol{\Sigma}^{-1}\mathbf{Y}\right)
C_{\kappa}\left(x\mu\boldsymbol{\alpha}^H\left(\mathbf{I}_m+\mathbf{Y}\right)\boldsymbol{\alpha}\right)d\mathbf{Y}.
\end{align}
Applying the complex analogue of \cite[Corollary 7.2.4]{Muirhead},
since
$\boldsymbol{\alpha}^H\left(\mathbf{I}_m+\mathbf{Y}\right)\boldsymbol{\alpha}$
is rank one, then it follows that $C_{\kappa}\left(x\mu\boldsymbol{\alpha}^H\left(\mathbf{I}_m+\mathbf{Y}\right)\boldsymbol{\alpha}\right)=0$
for all partitions $\kappa$ having more than one non-zero part.
Hence
\begin{align}
\label{zonaldef}
C_{\kappa}\left(x\mu\boldsymbol{\alpha}^H\left(\mathbf{I}_m+\mathbf{Y}\right)\boldsymbol{\alpha}\right)
&= (x \mu)^k \sum_{t=0}^{k}\binom{k}{t}\text{tr}^t\left(\boldsymbol{\alpha}\boldsymbol{\alpha}^H\mathbf{Y}\right)
\end{align}
and (\ref{zonaltrace}) can be written as
\begin{align}
\label{cdfintegral}
P\left(\lambda_{{min}}(\mathbf{W})>x\right)=
\frac{x^{mn}\exp\left(-\eta\right)\text{etr}\left(-x\boldsymbol{\Sigma}^{-1}\right)}{\tilde{\Gamma}_m(n)\left|\boldsymbol{\Sigma}\right|^n}
\sum_{k=0}^\infty\frac{\left(x\mu\right)^k}{k!(n)_{k}}\sum_{t=0}^k\binom{k}{t}\mathcal{Q}^t_{m,n}(x)
\end{align}
where
\begin{equation}
\label{finalmatintegra}
\mathcal{Q}^t_{m,n}(x)=\int_{\mathbf{Y}\in\mathcal{H}_m^+}
\left|\mathbf{I}_m+\mathbf{Y}\right|^{n-m}
\text{etr}\left(-x\boldsymbol{\Sigma}^{-1}\mathbf{Y}\right)
\text{tr}^t\left(\boldsymbol{\alpha}\boldsymbol{\alpha}^H\mathbf{Y}\right) d\mathbf{Y}.
\end{equation}
Unfortunately, it appears that this integral is not solvable in
closed form for \emph{arbitrary} values of $m$ and $n$. However, as
we now show, it can be solved in closed-form for various important
configurations, thus yielding exact expressions for the minimum
eigenvalue distributions. These results are presented in three key
theorems.  In each of these, we recall the notation
\begin{align}
\mu = {\rm tr} \left( \boldsymbol{\Theta}\boldsymbol{\Sigma}^{-1}
\right) , \quad \quad \eta = {\rm tr} \left( \boldsymbol{\Theta}
\right) .
\end{align}

The theorem below gives the exact minimum eigenvalue distribution
for ``square'' Wishart matrices:

\begin{theorem} \label{th:MainResult}
Let $\mathbf{X}\sim \mathcal{CN}_{m,m}\left(\boldsymbol{\Upsilon},\mathbf{I}_m\otimes\boldsymbol{\Sigma}\right)$, where $\boldsymbol{\Upsilon}\in\mathbb{C}^{m\times m}$ has rank one, and $\mathbf{W}=\mathbf{X}^H\mathbf{X}$. Then the c.d.f.\ of $\lambda_{\text{min}}(\mathbf{W})$ is given by
\begin{equation}
\label{cdfans}
F_{{\text{min}}}(x)=
1-\exp\left(-\eta\right)\mathrm{etr}\left(-x\boldsymbol{\Sigma}^{-1}\right)
\sum_{k=0}^{\infty}
\frac{\left(x\mu\right)^k}{k!(m)_{k}}
{}_1F_1\left(m;m+k;\eta\right).
\end{equation}
\end{theorem}
\begin{proof}
Substituting $m=n$ into (\ref{cdfintegral}) and
(\ref{finalmatintegra}) yields
\begin{align}
\label{cdfintmm}
P\left(\lambda_{{min}}(\mathbf{W})>x\right)=&
\frac{x^{m^2}\exp\left(-\eta\right)\text{etr}\left(-x\boldsymbol{\Sigma}^{-1}\right)}{\tilde{\Gamma}_m(m)\left|\boldsymbol{\Sigma}\right|^m}
\sum_{k=0}^\infty\frac{\left(x\mu\right)^k}{k!(m)_{k}}\sum_{t=0}^k\binom{k}{t}\mathcal{Q}^t_{m,m}(x)
\end{align}
where
\begin{equation}
\mathcal{Q}^t_{m,m}(x)=\int_{\mathbf{Y}\in\mathcal{H}_m^+}
\text{etr}\left(-x\boldsymbol{\Sigma}^{-1}\mathbf{Y}\right)
C_{\tau}\left(\boldsymbol{\alpha}\boldsymbol{\alpha}^H\mathbf{Y}\right) d\mathbf{Y}.
\end{equation}
This matrix integral can be solved using \cite[Eq. 6.1.20]{Mathai2} to give
\begin{equation}
\label{intfinalsol} \mathcal{Q}^t_{m,m}(x)=\frac{\tilde{\Gamma}_m(m)(m)_t
\left|\boldsymbol{\Sigma}\right|^m}{x^{m^2}}C_{\tau}\left(\frac{\boldsymbol{\Theta}}{\mu
x}\right)=\frac{\tilde{\Gamma}_m(m)(m)_t\left|\boldsymbol{\Sigma}\right|^m}{x^{m^2}}\left(\frac{\eta}{x\mu}\right)^t \; \end{equation}
where we have applied (\ref{eigendecom}) to arrive at the argument
of the zonal polynomial.
Substituting (\ref{intfinalsol}) into (\ref{cdfintmm}) with some
manipulation yields
\begin{equation}
\label{befresum}
P\left(\lambda_{{min}}(\mathbf{W})>x\right)=
\exp\left(-\eta\right)\text{etr}\left(-x\boldsymbol{\Sigma}^{-1}\right)
\sum_{k=0}^{\infty}
\frac{\left(x\mu\right)^k}{k!(m)_{k}}
\sum_{t=0}^k\binom{k}{t}(m)_t\left(\frac{\eta}{x\mu}\right)^t.
\end{equation}
To obtain a power series in $x$, we re-sum the infinite series as
follows
\begin{align}
\label{resum}
\sum_{k=0}^{\infty}
\frac{\left(x\mu\right)^k}{k!(m)_{k}}
\sum_{t=0}^k\binom{k}{t}(m)_t\left(\frac{\eta}{x\mu}\right)^t=
\sum_{k=0}^\infty\frac{\left(x\mu\right)^{k}}{k!(m)_k}{}_1F_1\left(m;m+k;\eta\right).
\end{align}
Finally, using (\ref{resum}) in (\ref{befresum}) with (\ref{cdf})
gives the result in (\ref{cdfans}).
\end{proof}
\begin{remark}
An alternative expression for the c.d.f. can be obtained by observing the fact that
\begin{align}
\sum_{k=0}^{\infty}
\frac{\left(x\mu\right)^k}{k!(m)_{k}}
\sum_{t=0}^k\binom{k}{t}(m)_t\left(\frac{\eta}{x\mu}\right)^t&=\sum_{t=0}^\infty\sum_{k=0}^\infty
\frac{(m)_t}{(m)_{t+k}t!k!}\eta^t\left(x\mu\right)^k\nonumber\\
& = \Phi_3\left(m,m,\eta,x\mu\right)
\end{align}
where $\Phi_3(a,b,x,y)$ is the confluent hypergeometric function of two variables \cite[Eq. 5.7.1.23 ]{Erdelyi}. Thus, we can write the minimum eigenvalue c.d.f. as
\begin{equation}
F_{{min}}(x)=1-\exp\left(-\eta\right)\mathrm{etr}\left(-x\boldsymbol{\Sigma}^{-1}\right)\Phi_3\left(m,m,\eta,x\mu\right).
\end{equation}
\end{remark}

The theorem below gives the exact minimum eigenvalue distribution
for $2 \times 2$ Wishart matrices with \emph{arbitrary} degrees of
freedom:
\begin{theorem}\label{th:nby2wishart}
Let $\mathbf{X}\sim \mathcal{CN}_{n,2}\left(\boldsymbol{\Upsilon},\mathbf{I}_n\otimes\boldsymbol{\Sigma}\right)$, where $\boldsymbol{\Upsilon}\in\mathbb{C}^{n\times 2}$ has rank one, and $\mathbf{W}=\mathbf{X}^H\mathbf{X}$. Then the c.d.f.\ of $\lambda_{\text{min}}(\mathbf{W})$ is given by
\begin{align}
\label{cdfn2}
F_{{\text{min}}}(x)=
1-\exp\left(-\eta\right)\frac{\mathrm{etr}\left(-x\boldsymbol{\Sigma}^{-1}\right)}{\tilde \Gamma_2(n)\left|\boldsymbol{\Sigma}\right|^{n-2}}
\sum_{k=0}^\infty
\frac{\left(x\mu\right)^k}{k!(n)_k}
\sum_{t=0}^k
\binom{k}{t}\left(\frac{\eta}{x\mu}\right)^t\rho(t,x)
\end{align}
where
\begin{align*}
\rho(t,x)=\sum_{i=0}^{n-2}\sum_{j=0}^i\sum_{l=0}^{\min(j,t)}
&(-1)^l\binom{n-2}{i}\binom{i}{j}\binom{t}{l}  j!(\omega_{i,j})_t\tilde \Gamma_2\left(\omega_{i,j}\right)\left(\frac{\mu}{\eta}\right)^l
\\
& \times
\left|\boldsymbol{\Sigma}\right|^{i+l/2-j/2}\mathcal{C}_{j-l}^{\omega_{i,j}+t}\left(\frac{1}{2}\mathrm{tr}
\left(\boldsymbol{\Sigma}^{-1}\right)\sqrt{\left|\boldsymbol{\Sigma}\right|}\right)x^{2n+j-2i-4}\;,
\end{align*}
and $\omega_{i,j}=i-j+2$.
\end{theorem}
\begin{proof}
We begin by substituting $m=2$ into (\ref{cdfintegral}) and
(\ref{finalmatintegra}) to yield
\begin{align*}
\label{ncdfexp}
P\left(\lambda_{min}(\mathbf{W})>x\right)=\frac{\exp(-\eta)}{\tilde \Gamma_2(n)\left|\boldsymbol{\Sigma}\right|^n}x^{2n}\text{etr}\left(-x\boldsymbol{\Sigma}^{-1}\right)\sum_{k=0}^\infty
\frac{\left(x\mu\right)^k}{k!(n)_k}\sum_{t=0}^k \binom{k}{t}\mathcal{Q}^t_{2,n}(x).
\end{align*}
Now we use the determinant expansion
\begin{equation}
\label{ncdfdetexp}
\left|\mathbf{I}_2+\mathbf{Y}\right|^{n-2}=\sum_{i=0}^{n-2}\sum_{j=1}^i
\binom{n-2}{i}\binom{i}{j}
\text{tr}^j\left(\mathbf{Y}\right)\left|\mathbf{Y}\right|^{i-j}
\end{equation}
to write $\mathcal{Q}^t_{2,n}(x)$ as
\begin{align}
\mathcal{Q}^t_{2,n}(x)=\sum_{i=0}^{n-2}\sum_{j=1}^i
\binom{n-2}{i}\binom{i}{j}
\int_{\mathbf{Y}\in\mathcal{H}_2^+}
\text{tr}^j\left(\mathbf{Y}\right)\left|\mathbf{Y}\right|^{i-j}
& \text{etr}\left(-x\boldsymbol{\Sigma}^{-1}\mathbf{Y}\right)\nonumber\\
& \hspace*{-1cm} \times
\mathrm{tr}^t\left(\boldsymbol{\alpha}\boldsymbol{\alpha}^H\mathbf{Y}\right)
d\mathbf{Y}.
\end{align}
 Lemma \ref{lem:2by2tracep} can be used to solve the above integral in closed form and subsequent use of (\ref{cdf}) followed by some algebraic manipulations gives (\ref{cdfn2}).
\end{proof}

Although the c.d.f.\ result in Theorem \ref{th:nby2wishart} is
seemingly complicated, it can be evaluated numerically for any value
of $n$.  Moreover, for specific values of $n$ it often gives
simplified solutions.  Some examples are shown in the following
corollaries.
\begin{corollary}\label{cor:3by2wishart}
Let $\mathbf{X}\sim \mathcal{CN}_{3,2}\left(\boldsymbol{\Upsilon},\mathbf{I}_3\otimes\boldsymbol{\Sigma}\right)$, where $\boldsymbol{\Upsilon}\in\mathbb{C}^{3\times 2}$ has rank one, and $\mathbf{W}=\mathbf{X}^H\mathbf{X}$. Then the c.d.f.\ of  $\lambda_{\text{min}}(\mathbf{W})$ is given by
\begin{equation}
\begin{split}
\label{cdfans3}
F_{{\text{min}}}(x)=
1-\exp\left(-\eta\right)\mathrm{etr}\left(-x\boldsymbol{\Sigma}^{-1}\right)
& \sum_{k=0}^\infty\frac{\left(x\mu\right)^k}{k!(3)_k}\mathcal{F}_{3,2}(k,\eta,x)
\end{split}
\end{equation}
where
\begin{equation*}
\mathcal{F}_{3,2}(k,\eta,x)=\varrho_1(x){}_1F_1\left(3;3+k;\eta\right)+\varrho_2(x){}_1F_1\left(2;3+k;\eta\right),
\end{equation*}
\begin{align*}
\varrho_1(x)=1+\left(\mathrm{tr}\left(\boldsymbol{\Sigma}^{-1}\right)-\frac{\mu}{\eta}\right)x,\;\;\text{and}\;\;
\varrho_2(x)=\frac{\mu}{\eta}x+\frac{x^2}{2|\boldsymbol{\Sigma}|}\;.
\end{align*}
\end{corollary}
\begin{remark}
An alternative expression for the above c.d.f. can be written based
on the confluent hypergeometric function of two arguments as
\begin{align*}
F_{{\text{min}}}(x)=
1-\exp\left(-\eta\right)\mathrm{etr}\left(-x\boldsymbol{\Sigma}^{-1}\right)
& \left(\varrho_1(x)\Phi_3\left(3,3,\eta,x\mu\right)\right.\\
& \qquad \qquad \left. +\;
\varrho_2(x)\Phi_3\left(2,3,\eta,x\mu\right)\right).
\end{align*}
\end{remark}
\begin{corollary}\label{cor:4by2wishart}
Let $\mathbf{X}\sim \mathcal{CN}_{4,2}\left(\boldsymbol{\Upsilon},\mathbf{I}_4\otimes\boldsymbol{\Sigma}\right)$, where $\boldsymbol{\Upsilon}\in\mathbb{C}^{4\times 2}$ has rank one, and $\mathbf{W}=\mathbf{X}^H\mathbf{X}$. Then the c.d.f.\ of $\lambda_{\text{min}}(\mathbf{W})$ is given by
\begin{equation}
\begin{split}
\label{cdfans4}
F_{{\text{min}}}(x)=
1-& \exp\left(-\eta\right)\mathrm{etr}\left(-x\boldsymbol{\Sigma}^{-1}\right)
\sum_{k=0}^\infty\frac{\left(x\mu\right)^k}{k!(4)_k}
\mathcal{F}_{4,2}(k,\eta,x)
\end{split}
\end{equation}
where
\begin{align*}
\mathcal{F}_{4,2}(k,\eta,x)=
\nu_1 (x){}_1F_1\left(4;4+k;\eta\right)+\nu_2 (x)&{}_1F_1\left(3;4+k;\eta\right)\\
& +
\nu_3 (x)
{}_1F_1\left(2;4+k;\eta\right),
\end{align*}
\begin{align*}
\nu_1 (x)=& 1+a_1x+\frac{a_1}{2}x^2,\\
\nu_2 (x)= & \frac{\mu}{\eta}x+\left(\frac{1}{3}+\frac{a_2}{3}+\frac{2}{3}\mathrm{tr}\left(\boldsymbol{\Sigma}^{-1}\right)a_1-a_1^2\right)x^2
+\frac{a_1}{3|\boldsymbol\Sigma|}x^3,\\
 \nu_3 (x)=& \left(\frac{a_1^2}{2}-\frac{2}{3}a_1\mathrm{tr}\left(\boldsymbol{\Sigma}^{-1}\right)-\frac{a_2}{3}+
 \frac{\mathrm{tr}^2\left(\boldsymbol{\Sigma}^{-1}\right)}{3}+\frac{\mathrm{tr}\left(\boldsymbol{\Sigma}^{-2}\right)}{6}\right)x^2\\
 & \hspace{7.5cm}+\frac{\mu x^3}{3\eta |\boldsymbol{\Sigma}|}+\frac{x^4}{12|\boldsymbol{\Sigma}|^2},
\end{align*}
$a_1=\mathrm{tr}\left(\boldsymbol{\Sigma}^{-1}\right)-\frac{\mu}{\eta}$, and $a_2=\mathrm{tr}^2\left(\boldsymbol{\Sigma}^{-1}\right)-\frac{2}{|\boldsymbol{\Sigma}|}-\frac{\mu}{\eta}$.
\end{corollary}
\begin{remark}
An alternative expression for the above c.d.f. can be written as
\begin{align*}
F_{{\text{min}}}(x)=
1-& \exp\left(-\eta\right)\mathrm{etr}\left(-x\boldsymbol{\Sigma}^{-1}\right)
\left(\nu_1 (x)\Phi_3(4,4,\eta,x \mu)
\right.\\
& \qquad \qquad \left.+ \;\nu_2 (x) \Phi_3(3,4,\eta,x \mu)+\nu_3 (x) \Phi_3(2,4,\eta,x \mu)\right).
\end{align*}
\end{remark}

The theorem below gives the exact minimum eigenvalue distribution
for $3 \times 3$ Wishart matrices with $4$ degrees of freedom:
\begin{theorem}\label{th:4by3wishart}
Let $\mathbf{X}\sim \mathcal{CN}_{4,3}\left(\boldsymbol{\Upsilon},\mathbf{I}_4\otimes\boldsymbol{\Sigma}\right)$, where $\boldsymbol{\Upsilon}\in\mathbb{C}^{4\times 3}$ has rank one, and $\mathbf{W}=\mathbf{X}^H\mathbf{X}$. Then the c.d.f.\ of $\lambda_{\text{min}}(\mathbf{W})$ is given by
\begin{align}
\label{34}
F_{{\text{min}}}(x)=
1-\exp\left(-\eta\right)\mathrm{etr}\left(-x\boldsymbol{\Sigma}^{-1}\right)
\sum_{k=0}^\infty
\frac{\left(x\mu\right)^k}{k!(4)_k}\mathcal{F}_{4,3}(k,\eta,x)
\end{align}
where
\begin{equation*}
\mathcal{F}_{4,3}(k,\eta,x)=\rho_1(x){}_1F_1\left(4;4+k;\eta \right)+\rho_2(x){}_1F_1\left(3;4+k;\eta \right),
\end{equation*}
\begin{align*}
\rho_1(x)& =
1+\left(\mathrm{tr}\left(\boldsymbol{\Sigma}^{-1}\right)-\frac{\mu}{\eta}\right)x+
\frac{\mathrm{tr}\left(\boldsymbol{\Theta\Sigma}\right)}{2\eta|\boldsymbol{\Sigma}|}x^2,\\
\rho_2(x)& =
\frac{\mu}{\eta}x+\frac{1}{2|\boldsymbol{\Sigma}|}\left(\mathrm{tr}\left(\boldsymbol{\Sigma}\right)-\mathrm{tr}\left(\boldsymbol{\Theta\Sigma}\right)
\frac{1}{\eta}\right)x^2+\frac{x^3}{6|\boldsymbol{\Sigma}|}.
\end{align*}
\end{theorem}
\begin{proof}
We can write (\ref{cdfintegral}) and (\ref{finalmatintegra}) in the case of $m=3$ and $n=4$ as
\begin{align}
\label{34cdfexp}
P\left(\lambda_{min}(\mathbf{W})>x\right)=\frac{\exp(-\eta)}{\tilde \Gamma_3(4)\left|\boldsymbol{\Sigma}\right|^4}x^{12}\text{etr}\left(-x\boldsymbol{\Sigma}^{-1}\right)\sum_{k=0}^\infty
\frac{\left(x\mu\right)^k}{k!(4)_k}\sum_{t=0}^k \binom{k}{t}\mathcal{Q}^t_{3,4}(x).
\end{align}
Following the identity
\begin{equation}
\label{det3ident}
\left|\mathbf{I}_3+\mathbf{Y}\right|=1+\text{tr}(\mathbf{Y})+|\mathbf{Y}|+C_{1,1,0}(\mathbf{Y}),
\end{equation}
 we can write $\mathcal{Q}^t_{3,4}(x)$ as
\begin{align}
\mathcal{Q}^t_{3,4}(x)=\int_{\mathbf{Y}\in\mathcal{H}_3^+}
&\text{etr}\left(-x\boldsymbol{\Sigma}^{-1}\mathbf{Y}\right)\text{tr}^t\left(\boldsymbol{\alpha}\boldsymbol{\alpha}^H\mathbf{Y}\right) d\mathbf{Y}\nonumber\\
& +\int_{\mathbf{Y}\in\mathcal{H}_3^+}
\text{etr}\left(-x\boldsymbol{\Sigma}^{-1}\mathbf{Y}\right)\text{tr}(\mathbf{Y})\text{tr}^t\left(\boldsymbol{\alpha}\boldsymbol{\alpha}^H\mathbf{Y}\right) d\mathbf{Y}\nonumber\\
& +
\int_{\mathbf{Y}\in\mathcal{H}_3^+}
\text{etr}\left(-x\boldsymbol{\Sigma}^{-1}\mathbf{Y}\right)|\mathbf{Y}|\text{tr}^t\left(\boldsymbol{\alpha}\boldsymbol{\alpha}^H\mathbf{Y}\right) d\mathbf{Y}\nonumber\\
&
+
\int_{\mathbf{Y}\in\mathcal{H}_3^+}
\text{etr}\left(-x\boldsymbol{\Sigma}^{-1}\mathbf{Y}\right)C_{1,1,0}(\mathbf{Y})\text{tr}^t\left(\boldsymbol{\alpha}\boldsymbol{\alpha}^H\mathbf{Y}\right) d\mathbf{Y}.
\end{align}
These matrix integrals can be solved with the aid of \cite[Eq.
6.1.20]{Mathai2}, Lemma \ref{lem:trace}, and Lemma
\ref{lem:3by4rank1} to yield
\begin{align}
\label{34Qans}
\mathcal{Q}^t_{3,4}(x)=\frac{|\boldsymbol{\Sigma}|^4\tilde
\Gamma_3(4)}{x^{12}}\left(\frac{\eta}{x\mu}\right)^t
\left(\rho_1(x)(4)_t+\rho_2(x) (3)_t\right)
\end{align}
where we have used the relations $t(3)_t=3(4)_t-3(3)_t$ and
$t(4)_{t-1}=(4)_t-(3)_t$. Substituting (\ref{34Qans}) into
(\ref{34cdfexp}), we obtain
\begin{align*}
P\left(\lambda_{min}(\mathbf{W})>x\right){=}\exp(-\eta)& \text{etr}\left(-x\boldsymbol{\Sigma}^{-1}\right)
\left(\hspace{-1mm}\rho_1(x)\hspace{-1.5mm}\sum_{k=0}^\infty\hspace{-1mm}
\frac{\left(x\mu\right)^k}{k!(4)_k}\hspace{-1mm}\sum_{t=0}^k \hspace{-1mm}\binom{k}{t}(4)_t\hspace{-1mm}\left(\frac{\eta}{x\mu}\right)^t\right.\nonumber\\
& \qquad \left.+\;\rho_2(x)\sum_{k=0}^\infty
\frac{\left(x\mu\right)^k}{k!(4)_k}\sum_{t=0}^k \binom{k}{t}(3)_t\left(\frac{\eta}{x\mu}\right)^t\right).
\end{align*}
Finally, we re-sum the infinite series as power series in $x$ and
use (\ref{cdf}) to arrive at the result in (\ref{34}).
\end{proof}
\begin{remark}
An alternative form of the c.d.f.\ above can be written as
\begin{align*}
F_{{\text{min}}}(x)=
1-\exp\left(-\eta\right)\mathrm{etr}\left(-x\boldsymbol{\Sigma}^{-1}\right)&\left(
\rho_1(x)\Phi_3(4,4,\eta,x \mu)\right.\\
& \qquad \left.+ \rho_2(x)\Phi_3(3,4,\eta,x \mu)\right).
\end{align*}
\end{remark}

We now present some simulation results to verify the validity of our
new minimum eigenvalue distributions. We construct the covariance
$\boldsymbol{\Sigma}$ matrix with $(j,k)$th element
\begin{equation}
\label{covmatrix}
\boldsymbol{\Sigma}_{j,k}=\exp\left(-\frac{\pi^3}{32}(j-k)^2\right),\;\; 1\leq j,k\leq m
\end{equation}
and the mean matrix $\boldsymbol{\Upsilon}$ as
\begin{equation}
\boldsymbol{\Upsilon}=\mathbf{a}^H\mathbf{b}
\end{equation}
where
\begin{align*}
\mathbf{a}&=\left[ 1\;\exp\left(2i\pi\cos \theta\right)\; \exp\left(4i\pi\cos \theta\right)\ldots\; \exp\left(2(n-1)i\pi\cos \theta\right)\right]\\
\mathbf{b}&=\left[ 1\;\exp\left(2i\pi\cos \theta\right)\;
\exp\left(4i\pi\cos \theta\right)\ldots\; \exp\left(2(m-1)i\pi\cos
\theta\right)\right]
\end{align*}
 with $\theta=\pi/4$ and $i=\sqrt{-1}$.
Note that these particular constructions for the covariance and mean
matrices are employed since they are reasonable for modeling
practical correlated Rician MIMO channels \cite{MatthewIT,Bol}.

Fig. \ref{fig1} compares our analytical results with simulated data.
The analytical curves for the cases $m=n$ were calculated based on
Theorem \ref{th:MainResult}, while for the cases $m=2$ and $m=3$,
they were calculated based on Theorems \ref{th:nby2wishart} and
\ref{th:4by3wishart} respectively.  The accuracy of our results is
clearly evident from the figure.  Note that in evaluating these
analytical curves, the infinite summations in (\ref{cdfans}),
(\ref{cdfn2}), and (\ref{34}) were truncated to a maximum of 20
terms; thereby demonstrating a fast convergence rate for each
series.

\begin{figure}
 \centering
 \vspace*{-1.0cm}
     \subfigure[$n=m$]{
            \includegraphics[width=.8\textwidth]{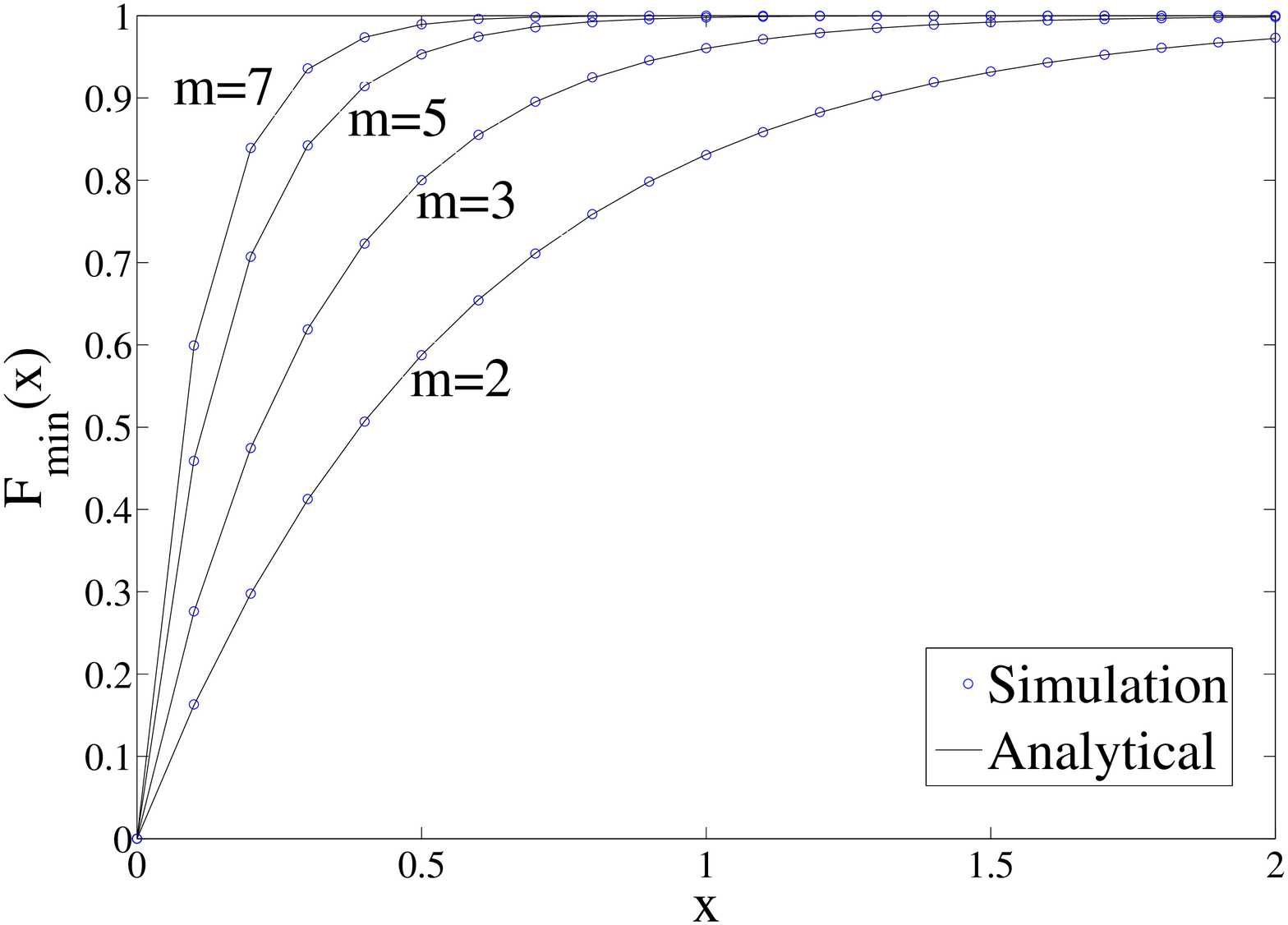}}
     \subfigure[$m=2,3$]{
          \includegraphics[width=.8\textwidth]{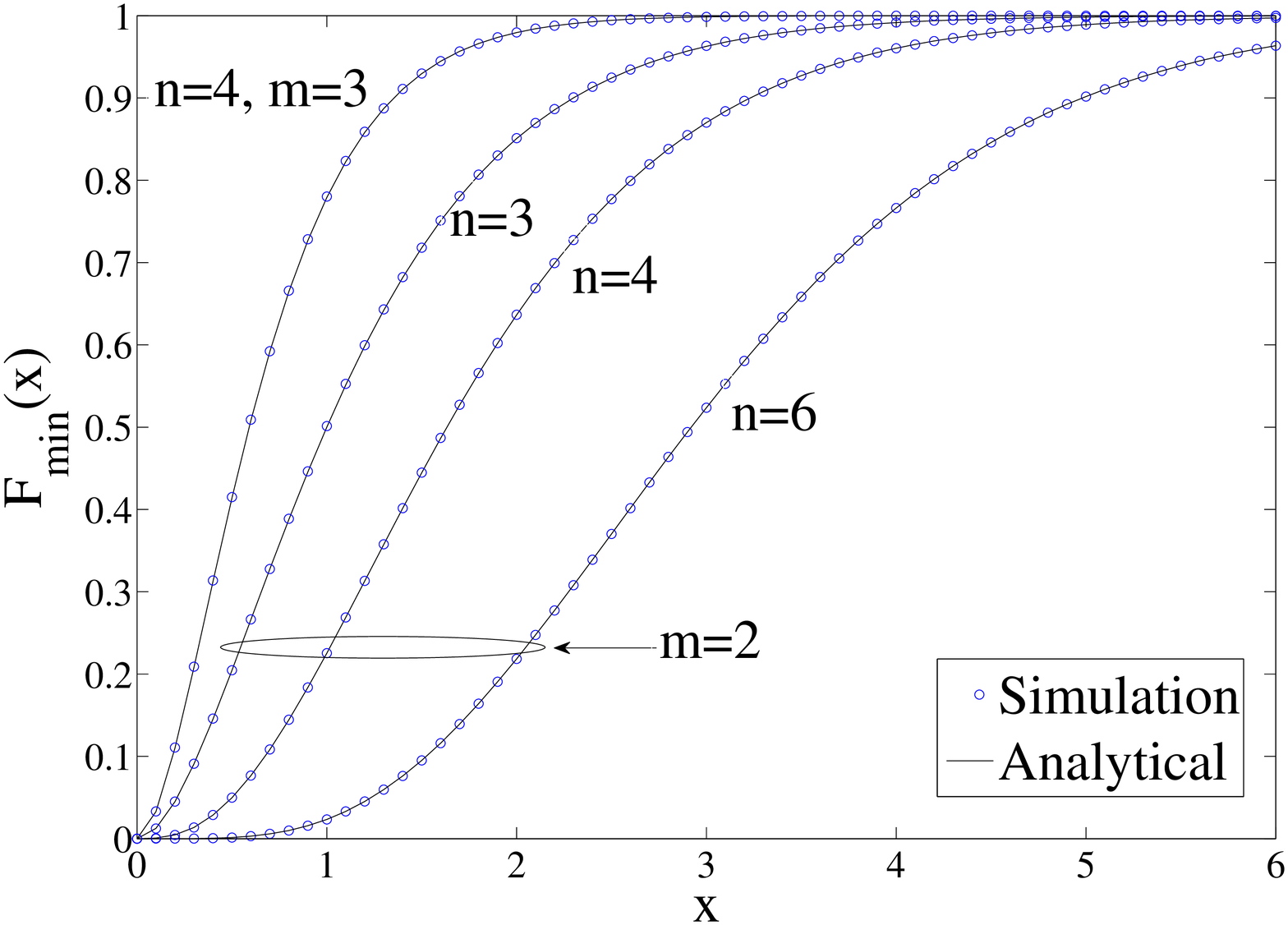}}\\
 \caption{Comparison of the analytical minimum eigenvalue c.d.f.s with simulated data points for correlated non-central Wishart matrices of various dimensions.}
 \vspace*{0.6cm}
 \label{fig1}
\end{figure}

\subsection{Gamma-Wishart Matrices}

We now turn to the analysis of the minimum eigenvalue distribution
of gamma-Wishart random matrices.  In this case, we deal with the matrix
$\mathbf{V}$ with joint density given in (\ref{Gram}). Thus, with
(\ref{eq:minEVRelation}), we have
\begin{equation*}
\label{cdfmin}
P\left( \lambda_{{min}}(\mathbf{V})>x\right)=
\mathcal{K}_{m,n}
\int_{\mathbf{V}-x\mathbf{I}_m\in\mathcal{H}_m^+}
\left|\mathbf{V}\right|^{n-m}\text{etr}\left(-\boldsymbol{\Sigma}^{-1}\mathbf{V}\right)
{}_1\widetilde{F}_1\left(\alpha;n;\mathbf{S}\mathbf{V}\right)d\mathbf{V}
\end{equation*}
where $\mathbf{S}=\boldsymbol{\Sigma}^{-1}\left(\boldsymbol{\Sigma}^{-1}+\boldsymbol{\Omega}\right)^{-1}
\boldsymbol{\Sigma}^{-1}$ and $\mathcal{K}_{m,n}=\frac{|\boldsymbol{\Omega}|^{\alpha}}
{\tilde \Gamma_m(n)|\boldsymbol{\Sigma}|^{n}\left|\boldsymbol{\Sigma}^{-1}+\boldsymbol{\Omega}\right|^{\alpha}}$ .
Applying the change of variables
$\mathbf{V}=x\left(\mathbf{I}_m+\mathbf{Y}\right)$ and using the
Kummer relation \cite{James1964}
\begin{equation*}
{}_1\tilde{F}_1\left(\alpha;n;x\mathbf{S}\left(\mathbf{I}_m+\mathbf{Y}\right)\right)=
\text{etr}\left(x\mathbf{S}\left(\mathbf{I}_m+\mathbf{Y}\right)\right)
{}_1\widetilde{F}_1\left(n-\alpha;n;-x\mathbf{S}\left(\mathbf{I}_m+\mathbf{Y}\right)\right)
\end{equation*}
yields
\begin{align}
\label{cdfseed}
P\left(\lambda_{{min}}(\mathbf{V})>x\right)=\mathcal{K}_{m,n}
x^{mn}\text{etr}\left(-x\mathbf{Q}\right)
&\int_{\mathbf{Y}\in\mathcal{H}_m^+}
 \left|\mathbf{I}_m+\mathbf{Y}\right|^{n-m}
  \text{etr}\left(-x\mathbf{Q}\mathbf{Y}\right)\nonumber\\
& \hspace*{-1cm} \times {}_1\widetilde{F}_1\left(n-\alpha;n;-x\mathbf{S}\left(\mathbf{I}_m+\mathbf{Y}\right)\right)d\mathbf{Y}
\end{align}
where $\mathbf{Q}=\boldsymbol{\Sigma}^{-1}-\mathbf{S}$.

This integral seems intractable for arbitrary values of $m$, $n$,
and $\alpha$. However, as we now show, it can be solved in closed
form solutions for some important configurations, thus yielding new exact expressions for the
minimum eigenvalue distributions.

The theorem below gives the exact minimum eigenvalue distribution for $2 \times 2$ gamma-Wishart matrices with arbitrary degrees of freedom (i.e., arbitrary $n$).

\begin{theorem}\label{th:wishgamnby2}
Let $\mathbf{V}\sim  \Gamma {\cal
W}_2 (n, \alpha, \boldsymbol{\Sigma}, \boldsymbol{\Omega})$, with
$\alpha \in \mathbb{Z}^{+}$ such that $\alpha>n\geq 2$. Then the c.d.f.\
of $\lambda_{min}(\mathbf{V})$ is given by
\begin{align}
\label{wishgamnby2}
F_{{{min}}}(x)=1-\mathcal{K}_{2,n}x^{2n}\mathrm{etr}\left(-x\mathbf{Q}\right)
 \sum_{k=0}^{2(\alpha-n)}
 \sum_{k_1=\left\lceil\frac{k}{2}\right\rceil}^{\min\left(k,\alpha-n\right)}
d_1^{k_1}\sum_{l=0}^{\left\lceil\frac{2k_1-k-1}{2}\right\rceil}d_2^{\kappa,l}\mathcal{I}_{k_1,l}(x)x^k
\end{align}
where
\begin{align*}d_1^{k_1}&=\frac{(\alpha-n)!(\alpha-n+1)!\left(2k_1-k+1\right)}{(\alpha-n-k_1)!
(\alpha-n+1+k_1-k)!\left(k_1+1\right)!\left(k-k_1\right)!(n)_{k_1}(n-1)_{k-k_1}}\\
d_2^{\kappa,l}&=(-1)^l4^l e^{\kappa}_l|\mathbf{S}|^{k-k_1+l} .
\end{align*}
Also,
\begin{align*}
\mathcal{I}_{k_1,l}(x)=\sum_{p=0}^{\varepsilon_{k_1,l}}  \sum_{j=0}^{\nu_{k_1,l}}
 p! \binom{\varepsilon_{k_1,l}}{p} \binom{\nu_{k_1,l}}{j}
\frac{ \mathrm{tr}^{\varepsilon_{k_1,l}-p}(\mathbf{S}) } { |\mathbf{Q}|^{j+2} x^{2(j+2)+p} }  \sum_{t=0}^j
\frac{j!}{(j-t)!}
|\mathbf{Q}|^t\mathcal{J}_{t,p,j}x^{t}\;,
\end{align*}
with
\begin{align*}
&\mathcal{J}_{t,p,j}=\sum_{t_1=\left\lceil\frac{t}{2}\right\rceil}^{t}
\tilde \Gamma_2(\omega_{j,t})\frac{\left(\omega_{j,t}\right)_{t_1}\left(\omega_{j,t}\right)_{t-t_1}\left(2t_1+1-t\right)}
{\left(t_1+1\right)!\left(t-t_1\right)!}
\sum_{i=0}^{\left\lceil\frac{2t_1-t-1}{2}\right\rceil}
\mathcal{L}_{\tau,p,i,j},
\end{align*}
where
\begin{align*}
\mathcal{L}_{\tau,p,i,j}{=}\sum_{q=0}^{\min(p,\varepsilon_{t_1,i})}
(-1)^{q+i}4^ie_i^\tau\binom{\varepsilon_{t_1,i}}{q}
\mathrm{tr}^{\varepsilon_{t_1,i}-q}(\mathbf{Q})& \mathrm{tr}^q(\mathbf{S})
|\mathbf{Q}|^{-\varepsilon_{t_1}-\frac{p-q}{2}}|\mathbf{S}|^{\frac{p-q}{2}}\\
& \times \mathcal{C}^{\varepsilon_{t_1}+\omega_{j,t}}_{p-q}\left(\frac{\mathrm{tr}\left(\mathbf{Q}^{-1}\mathbf{S}\right)}
{2\sqrt{\left|\mathbf{Q}^{-1}\mathbf{S}\right|}}\right) \; .
\end{align*}
$\kappa=(k_1,k-k_1)$ is a partition of $k$ such that $ \left\lceil\frac{k}{2}\right\rceil\leq k_1\leq \min(k,(\alpha-n))$, $\tau=(t_1,t-t_1)$ is a partition of $t$ such that $ \left\lceil\frac{t}{2}\right\rceil\leq t_1\leq t$, $\omega_{j,t}=j-t+2$ and $\nu_{k_1,l}=n+l+k-k_1-2$.
\end{theorem}
\begin{proof}
Particularizing (\ref{cdfseed}) to $m = 2$, $\alpha>n\geq 2$ and
$\alpha \in \mathbb{Z}^+$, and applying the zonal polynomial
expansion (\ref{hyptrk}) yields
\begin{align}
\label{hypozonexp}
P\left(\lambda_{{min}}(\mathbf{V})>x\right)=\mathcal{K}_{2,n}x^{2n}\text{etr}\left(-x\mathbf{Q}\right)
 \sum_{k=0}^{2(\alpha-n)}&\widetilde\sum_{\kappa}
\frac{[-(\alpha-n)]_{\kappa}}{[n]_\kappa k!}(-x)^k\nonumber\\
& \hspace*{-4cm} \times \int_{\mathbf{Y}\in\mathcal{H}_2^+}
\left|\mathbf{I}_2+\mathbf{Y}\right|^{n-2}\text{etr}\left(-x\mathbf{Q}\mathbf{Y}\right) C_{\kappa}\left(\mathbf{S}\left(\mathbf{I}_2+\mathbf{Y}\right)\right)d\mathbf{Y}
\end{align}
where $\kappa=(k_1,k_2)$ is a partition of $k$ into not more than two parts such that $k_1+k_2=k$ and $k_1\geq k_2 \geq 0,\;\forall k_1\in\{0,1,\ldots,\alpha-n\}$. Note that the series over $k$ is finite (truncated at $k=2(\alpha-n)$) due to the negative sign of the generalized complex hypergeometric coefficient. Careful inspection reveals that $\kappa$ can be written as $\kappa=\left(k_1,k-k_1\right)$, where $\left\lceil\frac{k}{2}\right\rceil\leq k_1\leq \min\left(k,(\alpha-n)\right)$.
This fact, along with the alternative representation of complex zonal polynomial given in \cite{Takemura,Mathai}, and Lemma \ref{lem:factorize},
\begin{align}
\label{zonal2exp}
C_{\kappa}\left(\mathbf{S}\left(\mathbf{I}_2+\mathbf{Y}\right)\right)=\;& \frac{k!\left(2k_1-k+1\right)}{\left(k_1+1\right)!\left(k-k_1\right)!}
\left|\mathbf{S}\left(\mathbf{I}_2+\mathbf{Y}\right)\right|^{k-k_1}\nonumber\\
& \hspace*{-1cm} \times \displaystyle \sum_{l=0}^{\left\lceil\frac{2k_1-k-1}{2}\right\rceil}
(-1)^l4^l e_l^\kappa
\left|\mathbf{S}\left(\mathbf{I}_2+\mathbf{Y}\right)\right|^{l}
\text{tr}^{\varepsilon_{k_1,l}}\left(\mathbf{S}\left(\mathbf{I}_2+\mathbf{Y}\right)\right)
\end{align}
gives (after some manipulations)
\begin{align}
\label{cdf2mexpress}
P\left(\lambda_{min}(\mathbf{V})>x\right)=\mathcal{K}_{2,n}x^{2n}\text{etr}\left(-x\mathbf{Q}\right)
 & \sum_{k=0}^{2(\alpha-n)}
\sum_{k_1=\left\lceil\frac{k}{2}\right\rceil}^{\min\left(k,(\alpha-n)\right)}
d_1^{k_1}\nonumber\\
& \quad \times \sum_{l=0}^{\left\lceil\frac{2k_1-k-1}{2}\right\rceil}d_2^{\kappa,l}\mathcal{I}_{k_1,l}(x)x^k
\end{align}
where
\begin{equation}
\mathcal{I}_{k_1,l}(x)=\int_{\mathbf{Y}\in\mathcal{H}_2^+}
\text{etr}\left(-x\mathbf{Q}\mathbf{Y}\right)\left|\mathbf{I}_2+\mathbf{Y}\right|^{\nu_{k_1,l}}
\text{tr}^{\varepsilon_{k_1,l}}\left(\mathbf{S}\left(\mathbf{I}_2+\mathbf{Y}\right)\right) d\mathbf{Y}.
\end{equation}
Using $\left|\mathbf{I}_2+\mathbf{Y}\right|=1+\text{tr}(\mathbf{Y})+|\mathbf{Y}|$
and the binomial theorem yields
\begin{align}
\label{Idef}
\mathcal{I}_{k_1,l}(x)= \sum_{p=0}^{\varepsilon_{k_1,l}}  \sum_{j=0}^{\nu_{k_1,l}}
 p!\binom{\varepsilon_{k_1,l}}{p} \binom{\nu_{k_1,l}}{j}
\frac{ \text{tr}^{\varepsilon_{k_1,l}-p}(\mathbf{S}) }{ |\mathbf{Q}|^{j+2} x^{2(j+2)+p} } \sum_{t=0}^j
\binom{j}{t}|\mathbf{Q}|^t\mathcal{J}_{t,p,j}x^{t}
\end{align}
where
\begin{equation}
\mathcal{J}_{t,p,j}=\frac{|x\mathbf{Q}|^{j-t+2}x^{t+p}}{p!}
\int_{\mathbf{Y}\in\mathcal{H}_2^+}
\text{etr}\left(-x\mathbf{Q}\mathbf{Y}\right)\text{tr}^{p}(\mathbf{SY})\text{tr}^{t}(\mathbf{Y})|\mathbf{Y}|^{j-t}d\mathbf{Y}.
\end{equation}
Finally, solving the remaining integral using Lemma \ref{lem:tracegamma} and recalling (\ref{cdf}) concludes the proof.
\end{proof}
Note that the minimum eigenvalue c.d.f.\ result given in (\ref{wishgamnby2}) can be easily computed numerically, since it contains only finite summations. Moreover, for specific values of $n$ and $\alpha$, it leads to simplified solutions, as shown in the following corollary.
\begin{corollary}

Let $\mathbf{V}\sim  \Gamma {\cal W}_2 (2, 3, \boldsymbol{\Sigma}, \boldsymbol{\Omega})$. Then the c.d.f. of $\lambda_{min}(\mathbf{V})$ is given by
\begin{align}
F_{min}(x)=1-\frac{|\boldsymbol{\Omega}|}{|\boldsymbol{\Sigma}^{-1}+\boldsymbol{\Omega}|}
&\mathrm{etr}\left(-x\mathbf{Q}\right)
 \left(\left|\mathbf{I}_2+\boldsymbol{\Omega}^{-1}\boldsymbol{\Sigma}^{-1}\right|+ \right.\nonumber\\
& \qquad \left.+\left(\frac{\mathrm{tr}(\mathbf{S})}{2}+\mathrm{tr}(\mathbf{Q}^{-1})|\mathbf{S}|\right)x+\frac{|\mathbf{S}|}{2}x^2\right).
\end{align}
\end{corollary}

The theorem below gives the exact minimum eigenvalue distribution for $3 \times 3$ gamma-Wishart matrices with $3$ degrees of freedom.
\begin{theorem}\label{th:4by3wishgama}
Let $\mathbf{V}\sim  \Gamma {\cal W}_3 (3, 4, \boldsymbol{\Sigma}, \boldsymbol{\Omega})$. Then the c.d.f. of $\lambda_{min}(\mathbf{V})$ is given by
\begin{align}
\label{cdf334}
F_{min}(x){=}1{-}\frac{|\boldsymbol{\Omega}|\mathrm{etr}\left(-x\mathbf{Q}\right)}{|\boldsymbol{\Sigma}^{-1}+\boldsymbol{\Omega}|}
&\left(\left|\mathbf{I}_3+\boldsymbol{\Omega}^{-1}\boldsymbol{\Sigma}^{-1}\right|+\mathrm{tr}(\mathbf{F})\frac{x}{6}+
\mathrm{tr}(\mathbf{G})|\mathbf{S}|\frac{x^2}{6} + |\mathbf{S}|\frac{x^3}{6}\right)
\end{align}
where
\begin{equation}
\mathbf{F}=2\mathbf{S}-3\mathbf{Q}^{-1}\mathbf{S}-3|\mathbf{S}|\mathbf{Q}^{-1}+6|\mathbf{S}||\mathbf{Q}|^{-1}\mathbf{Q}+3\left|\mathbf{I}_3+\mathbf{S}
\right|\mathbf{Q}^{-1}\left(\mathbf{I}_3+\mathbf{S}\right)^{-1}\mathbf{S}
\end{equation}
and
\begin{equation}
\mathbf{G}=\mathbf{S}^{-1}+3\mathbf{Q}^{-1} \; .
\end{equation}
\end{theorem}
\begin{proof}
In this case (\ref{cdfseed}) becomes
\begin{align}
P\left(\lambda_{{min}}(\mathbf{V})>x\right)=\mathcal{K}_{3,3}
x^{9} \text{etr}\left(-x\mathbf{Q}\right)
&\int_{\mathbf{Y}\in\mathcal{H}_3^+}
 \text{etr}\left(-x\mathbf{QY}\right)\nonumber\\
& \times {}_1\widetilde F_1\left(-1;3;-x\mathbf{S}\left(\mathbf{I}_3+\mathbf{S}\right)\right) d\mathbf{Y}
\end{align}
which upon applying the zonal polynomial expansion for the hypergeometric function (\ref{hyptrk}) yields
\begin{align}
\label{intmed}
P\left(\lambda_{{min}}(\mathbf{V})>x\right)=\mathcal{K}_{3,3}
x^{9} \text{etr}\left(-x\mathbf{Q}\right)
\sum_{k=0}^3
\frac{(-x)^k}{k!}& \widetilde\sum_{\kappa}\frac{[-1]_\kappa}{[3]_\kappa}
\int_{\mathbf{Y}\in\mathcal{H}_3^+}
\text{etr}\left(-x\mathbf{QY}\right)\nonumber\\
&  \times C_{\kappa}\left(\mathbf{S}\left(\mathbf{I}_3+\mathbf{Y}\right)\right)
d\mathbf{Y}
\end{align}
where $\kappa=\left(k_1,k_2,k_3\right)$ is a partition of $k$.
It is not difficult to see that the admissible partitions corresponding to the integers $0$, $1$, $2$, and $3$ are $(0,0,0)$, $(1,0,0)$, $(1,1,0)$, and $(1,1,1)$ respectively. Thus, we can write (\ref{intmed}) as
\begin{align}
\label{gamma3}
P\left(\lambda_{\text{min}}(\mathbf{V})>x\right)&=\mathcal{K}_{3,3}
x^{9} \text{etr}\left(-x\mathbf{Q}\right)
\left(
\int_{\mathbf{Y}\in\mathcal{H}_3^+}
\text{etr}\left(-x\mathbf{QY}\right) d\mathbf{Y}\right.\nonumber\\
& \qquad \left. +
\frac{x}{3}
\int_{\mathbf{Y}\in\mathcal{H}_3^+}
\text{etr}\left(-x\mathbf{QY}\right)
C_{1,0,0}\left(\mathbf{S}\left(\mathbf{I}_3+\mathbf{Y}\right)\right)
d\mathbf{Y}\right.\nonumber\\
& \qquad \left.
 +
\frac{x^2}{6}\int_{\mathbf{Y}\in\mathcal{H}_3^+}
\text{etr}\left(-x\mathbf{QY}\right)
C_{1,1,0}\left(\mathbf{S}\left(\mathbf{I}_3+\mathbf{Y}\right)\right)
d\mathbf{Y}\right.\nonumber\\
&\quad \left.
+ \frac{x^3}{6}
\int_{\mathbf{Y}\in\mathcal{H}_3^+}
\text{etr}\left(-x\mathbf{QY}\right)
C_{1,1,1}\left(\mathbf{S}\left(\mathbf{I}_3+\mathbf{Y}\right)\right)
d\mathbf{Y}
\right).
\end{align}
Moreover, we have
\begin{align}
\label{zonal3iden}
C_{1,0,0}\left(\mathbf{S}\left(\mathbf{I}_3+\mathbf{Y}\right)\right)&=\text{tr}\left(\mathbf{S}\left(\mathbf{I}_3+\mathbf{Y}\right)\right)
 \nonumber\\
C_{1,1,1}\left(\mathbf{S}\left(\mathbf{I}_3+\mathbf{Y}\right)\right)&=|\mathbf{S}||\mathbf{I}_3+\mathbf{Y}|.
\end{align}
Utilizing (\ref{det3ident}) we can express
\begin{equation}
\label{zonal32}
C_{1,1,0}\left(\mathbf{S}\left(\mathbf{I}_3+\mathbf{Y}\right)\right)=\left|\mathbf{I}_3+\left(\mathbf{S}\left(\mathbf{I}_3+\mathbf{Y}\right)\right)\right|-1-
\text{tr}\left(\mathbf{S}\left(\mathbf{I}_3+\mathbf{Y}\right)\right)-|\mathbf{S}\left(\mathbf{I}_3+\mathbf{Y}\right)|.
\end{equation}
Now, using (\ref{zonal3iden}) and (\ref{zonal32}) in (\ref{gamma3}) yields
\begin{align}
\label{h1h2def}
P\left(\lambda_{min}(\mathbf{V})>x\right)=\mathcal{K}_{3,3} &
x^{9}  \text{etr}\left(-x\mathbf{Q}\right) \left(
\left(1-\frac{x^2}{6}-\frac{\text{tr}(\mathbf{S})x^2}{6}+\frac{\text{tr}(\mathbf{S})x}{3}\right)\right.\nonumber\\
& \qquad \times \int_{\mathbf{Y}\in\mathcal{H}_3^+}
\text{etr}\left(-x\mathbf{QY}\right) d\mathbf{Y}\nonumber\\
& +
\left(\frac{x}{3}-\frac{x^2}{6}\right)
\int_{\mathbf{Y}\in\mathcal{H}_3^+}
\text{etr}\left(-x\mathbf{QY}\right) C_{1,0,0}\left(\mathbf{SY}\right)d\mathbf{Y}
\nonumber\\
& \left.
+\frac{|\mathbf{S}|}{6}\left(x^3-x^2\right)\mathcal{G}_1(x)+\left|\mathbf{I}_3+\mathbf{S}\right|\frac{x^2}{6}\mathcal{G}_2(x)
\right)
\end{align}
where
\begin{equation}
\label{h1}
\mathcal{G}_1(x)=\int_{\mathbf{Y}\in\mathcal{H}_3^+}
\text{etr}\left(-x\mathbf{QY}\right) \left|\mathbf{I}_3+\mathbf{Y}\right|d\mathbf{Y}
\end{equation}
and
\begin{equation}
\label{h2}
\mathcal{G}_2(x)=\int_{\mathbf{Y}\in\mathcal{H}_3^+}
\text{etr}\left(-x\mathbf{QY}\right) \left|\mathbf{I}_3+\left(\mathbf{I}_3+\mathbf{S}\right)^{-1}\mathbf{S}\mathbf{Y}\right|d\mathbf{Y}.
\end{equation}
The first and second integrals in (\ref{h1h2def}) can be evaluated using \cite[Eq. 6.1.20]{Mathai2}, thus we concentrate on the evaluation of $\mathcal{G}_1(x)$ and $\mathcal{G}_2(x)$. We provide a detailed solution for the integral $\mathcal{G}_2(x)$ only, since both (\ref{h1}) and (\ref{h2}) share a common structure.

Using the relation $\left|\mathbf{I}_3+\left(\mathbf{I}_3+\mathbf{S}\right)^{-1}\mathbf{S}\mathbf{Y}\right|={}_1\widetilde F_0\left(-1;-\left(\mathbf{I}_3+\mathbf{S}\right)^{-1}\mathbf{S}\mathbf{Y}\right)$
in (\ref{h2}) yields
\begin{equation}
\mathcal{G}_2(x)=\int_{\mathbf{Y}\in\mathcal{H}_3^+}
\text{etr}\left(-x\mathbf{QY}\right)
{}_1\widetilde F_0\left(-1;-\left(\mathbf{I}_3+\mathbf{S}\right)^{-1}\mathbf{S}\mathbf{Y}\right) d\mathbf{Y}.
\end{equation}
This integral can be solved using \cite[Eq. 3.20]{Rathna} as
\begin{align*}
\mathcal{G}_2(x)& =\tilde \Gamma_3(3)|\mathbf{Q}|^{-3}x^{-9}\;
{}_2\widetilde F_{0}\left(-1,3;-x^{-1}\mathbf{Q}^{-1}\left(\mathbf{I}_3+\mathbf{S}\right)^{-1}\mathbf{S}\right)\nonumber\\
& = \tilde \Gamma_3(3)|\mathbf{Q}|^{-3}x^{-9}
\sum_{k=0}^3\frac{(-1)^k}{x^k k!}
\widetilde \sum_{\kappa}
[-1]_\kappa [3]_\kappa
C_\kappa\left(\mathbf{Q}^{-1}\left(\mathbf{I}_3+\mathbf{S}\right)^{-1}\mathbf{S}\right).
\end{align*}
Since the valid partitions corresponding to the summation index $k=0,1,2$ and $3$ are respectively $(0,0,0),(1,0,0),(1,1,0)$ and $(1,1,1)$, we can use equations analogous to (\ref{zonal3iden}) to obtain
\begin{align}
\label{h2ans}
\mathcal{G}_2(x)  =\tilde \Gamma_3(3)|\mathbf{Q}|^{-3}& x^{-9}
\left(1+3x^{-1}\text{tr}\left(\mathbf{Q}^{-1}\left(\mathbf{I}_3+\mathbf{S}\right)^{-1}\mathbf{S}\right)\right.\nonumber\\
& \quad +\;6x^{-2}C_{1,1,0}\left(\mathbf{Q}^{-1}\left(\mathbf{I}_3+\mathbf{S}\right)^{-1}\mathbf{S}\right)\nonumber\\
& \quad \left.+\;\;6 x^{-3}|\mathbf{Q}|^{-1}\left|\mathbf{I}_3+\mathbf{S}\right|^{-1}|\mathbf{S}|\right).
\end{align}
Following similar arguments, we can obtain
\begin{align}
\label{h1ans}
\mathcal{G}_1(x) & =\tilde \Gamma_3(3)|\mathbf{Q}|^{-3}x^{-9}
\left(
1+3x^{-1}\text{tr}\left(\mathbf{Q}^{-1}\right)+6x^{-2}C_{1,1,0}\left(\mathbf{Q}^{-1}\right)+6x^{-3}|\mathbf{Q}|^{-1}
\right).
\end{align}
Finally, using (\ref{h2ans}), (\ref{h1ans}), and (\ref{110zoanlex}) in (\ref{h1h2def}), recalling (\ref{cdf}), and applying  some lengthy algebraic manipulations, we arrive at the result in (\ref{cdf334}).
\end{proof}

Fig. \ref{fig2} compares our analytical results with simulated data.  The analytical curves for the cases  $m=2$ and $m=3$ were computed based on Theorems \ref{th:wishgamnby2} and \ref{th:4by3wishgama} respectively. Here we have used the same $\boldsymbol{\Sigma}$ as defined in (\ref{covmatrix}), whereas $\boldsymbol{\Omega}$ is constructed with the following $j,k$th element:
\begin{equation}
\label{gampara}
\boldsymbol{\Omega}_{j,k}=\exp\left(-0.7(j-k)i\pi\right)\exp\left(-\frac{147\pi^3}{4000}(j-k)^2\right),\;\; 1\leq j,k\leq m
\end{equation}
with $i=\sqrt{-1}$. As expected, the analytical curves match closely with the simulated curves.

\begin{figure}
 \centering
 \vspace*{-1.0cm}
     \subfigure[$n=3, m=2$]{
            \includegraphics[width=.8\textwidth]{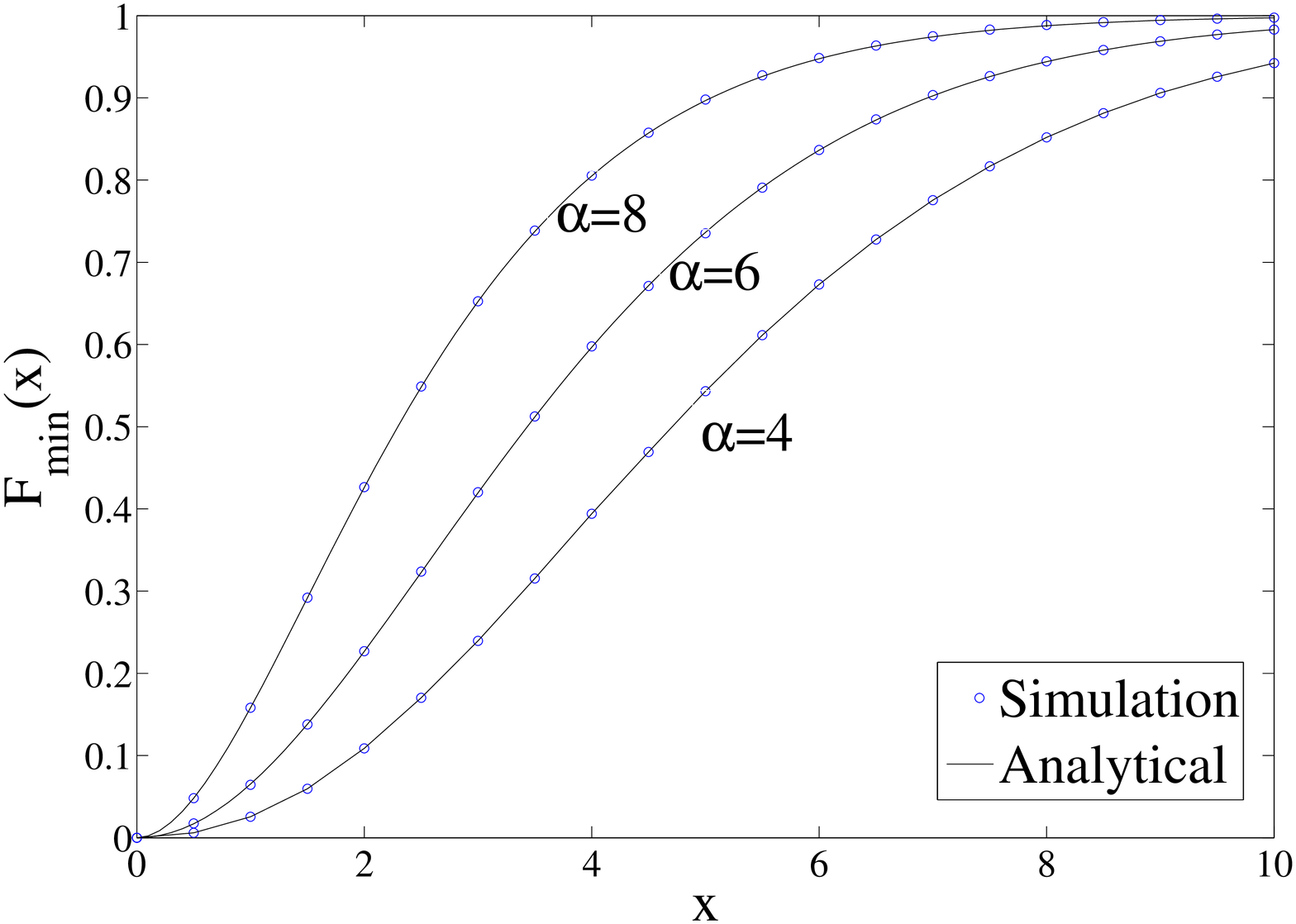}}
     \subfigure[$m=2,3$]{
          \includegraphics[width=.8\textwidth]{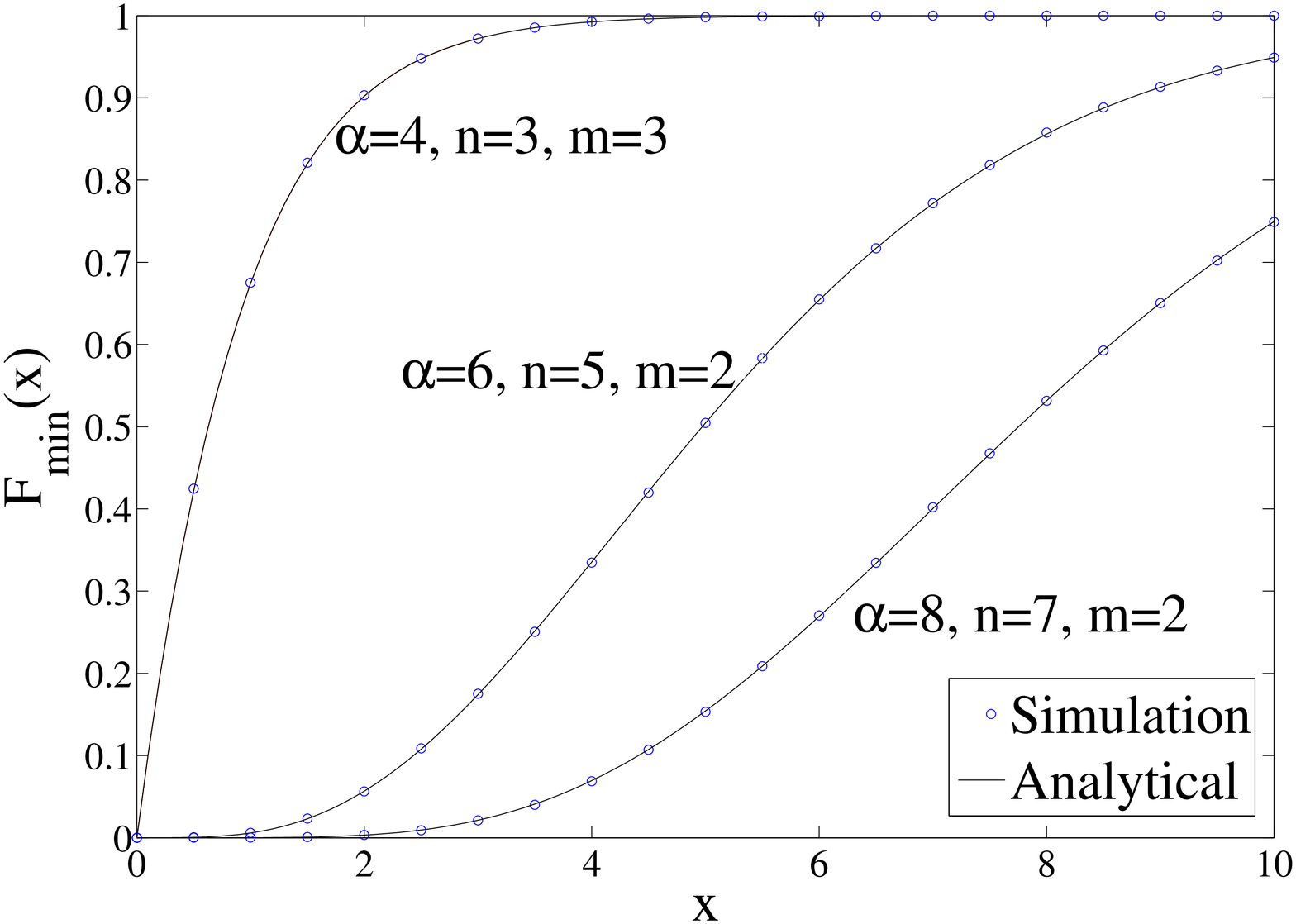}}\\
 \caption{Comparison of the analytical minimum eigenvalue c.d.f.s with simulated data points for correlated gamma-Wishart matrices with various dimensions and parameters.}
 \label{fig2}
\end{figure}

\section{New Maximum Eigenvalue Distributions}

In this section, we shift attention to the distribution of the
\emph{maximum} eigenvalue of correlated non-central Wishart and
gamma-Wishart random matrices.  As for the minimum eigenvalue
distribution considered previously, once again the most direct
approach of integrating the joint eigenvalue p.d.f. over a suitable multidimensional region seems intractable. To this end, we write the maximum eigenvalue $\lambda_{max} (\mathbf{Y})$ of $\mathbf{Y}\in\mathcal{H}_m^+$ as
\begin{equation}
\label{cdfmax}
F_{max}(x)=P\left(\lambda_{max}(\mathbf{Y})<
x\right)=P\left(\mathbf{Y} < x
\mathbf{I}_m \right)  \;
\end{equation}
which allows one to deal purely with the distribution of
$\mathbf{Y}$, rather than the distribution of its eigenvalues.

\subsection{Correlated Non-Central Wishart Case}

For the non-central Wishart scenario, we deal with the matrix
$\mathbf{W}$ with joint density given in (\ref{wishart}). Thus, with
(\ref{cdfmax}), we have
\begin{align}
\label{maximumeigen}
P\left(\lambda_{max}(\mathbf{W})<x\right)&=\int_{\mathbf{W}<x\mathbf{I}_m}
f_{\mathbf{W}}(\mathbf{W})d\mathbf{W}\nonumber\\
&= \frac{\exp(-\eta)}{\tilde \Gamma_m(n) |\boldsymbol{\Sigma}|^n}
\int_{x\mathbf{I}_m-\mathbf{W}\in\mathcal{H}_m^+}|\mathbf{W}|^{n-m}\text{etr}\left(-\boldsymbol{\Sigma}^{-1}\mathbf{W}\right)\nonumber\\
& \hspace{3cm} \qquad \times {}_0\widetilde F_1\left(n;\boldsymbol{\Theta\Sigma}^{-1}\mathbf{W}\right)d\mathbf{W}.
\end{align}
Applying the change of variable $\mathbf{W}=x\mathbf{Y}$ with
$d\mathbf{W}=x^{m^2}d\mathbf{Y}$ in (\ref{maximumeigen}) gives
\begin{align}
P\left(\lambda_{max}(\mathbf{W})<x\right)=
\frac{x^{mn}\exp(-\eta)}{\tilde \Gamma_m(n) |\boldsymbol{\Sigma}|^n}
\int_{\mathbf{0}}^{\mathbf{I}_m}
& |\mathbf{Y}|^{n-m} \text{etr}\left(-x\boldsymbol{\Sigma}^{-1}\mathbf{Y}\right)\nonumber\\
& \times {}_0\widetilde F_1\left(n;x\boldsymbol{\Theta\Sigma}^{-1}\mathbf{Y}\right)d\mathbf{Y}.
\end{align}
Expanding the hypergeometric function with its equivalent series
expansion followed by using the reasoning which led to
(\ref{zonaldef}) yields
\begin{align}
\label{maxwishart}
P\left(\lambda_{max}(\mathbf{W})<x\right)=
\frac{x^{mn}\exp(-\eta)}{\tilde \Gamma_m(n) |\boldsymbol{\Sigma}|^n}
\sum_{k=0}^\infty
\frac{\left(x\mu\right)^k}{(n)_k k!}
\int_{\mathbf{0}}^{\mathbf{I}_m}
& |\mathbf{Y}|^{n-m}\text{etr}\left(-x\boldsymbol{\Sigma}^{-1}\mathbf{Y}\right)\nonumber\\
& \times \text{tr}^k\left(\boldsymbol{\alpha}\boldsymbol{\alpha}^H\mathbf{Y}\right)d\mathbf{Y}
\end{align}
where we have applied
$\left(\boldsymbol{\alpha}^H\mathbf{Y}\boldsymbol{\alpha}\right)^k=\text{tr}^k\left(\boldsymbol{\alpha}\boldsymbol{\alpha}^H\mathbf{Y}\right)$.
This matrix integral seems intractable for arbitrary values $m$ and
$n$.  In fact, this integral seems even more difficult to tackle
than that which arises in the minimum eigenvalue formulation, i.e.,
Eq.\ (\ref{finalmatintegra}).  As the following theorem shows,
however, we can obtain a solution for the case of $2 \times 2$
non-central Wishart matrices with arbitrary degrees of freedom. This
is significant, because it presents the first tractable result for
the maximum eigenvalue c.d.f.\ of correlated complex non-central
Wishart matrices.

\begin{theorem}
Let $\mathbf{X}\sim \mathcal{CN}_{n,2}\left(\boldsymbol{\Upsilon},\mathbf{I}_n\otimes\boldsymbol{\Sigma}\right)$, where $\boldsymbol{\Upsilon}\in\mathbb{C}^{n\times 2}$ has rank one, and $\mathbf{W}=\mathbf{X}^H\mathbf{X}$. Then the c.d.f.\ of $\lambda_{max}(\mathbf{W})$ is given by
\begin{equation} \label{eq:nonCentMax}
F_{max}(x)=\frac{x^{2n}\exp(-\eta)}{n!(n+1)!}
\sum_{k=0}^\infty
\frac{\left(x\mu\right)^k}{(n)_k k!}
\phi_{-x\boldsymbol{\Sigma}^{-1},\boldsymbol{\alpha}\boldsymbol{\alpha}^H,n}^{(k)}(0)
\end{equation}
where
$\phi_{-x\boldsymbol{\Sigma}^{-1},\boldsymbol{\alpha}\boldsymbol{\alpha}^H,n}^{(k)}(0)$
is calculated recursively via (\ref{sub1})-(\ref{initialcond}).
\end{theorem}
\begin{proof}
Substituting $m=2$ into (\ref{maxwishart}), the proof follows upon
application of Lemma \ref{lem:1f1}.
\end{proof}
\begin{remark}
An alternative expression for (\ref{eq:nonCentMax}) can be obtained
by employing the moment generating function based power series
expansion approach given in \cite{Mathai3}. However, we have found
that by employing that approach the final expression is more
complicated, since it includes two infinite summations along with a
recursive summation term.
\end{remark}

\subsection{Correlated Gamma-Wishart Case}

We now turn consider the maximum eigenvalue distribution of
gamma-Wishart random matrices.  In this case, we deal with the
matrix $\mathbf{V}$ with joint density given in (\ref{Gram}). Thus,
with (\ref{cdfmax}), we have
\begin{align}
\label{grammax}
P\left(\lambda_{max}(\mathbf{V})<x\right)=
\mathcal{K}_{m,n} x^{mn}
\int_0^{\mathbf{I}_m}
|\mathbf{Y}|^{n-m}&\text{etr}\left(-x\mathbf{QY}\right)\nonumber\\
&\times {}_1\widetilde F_1\left(n-\alpha;n;-x\mathbf{SY}\right)d\mathbf{Y}.
\end{align}
In the following theorem, we present a new exact closed form
expression for the c.d.f.\ of the maximum eigenvalue of $\mathbf{V}$
for some particularizations of $m$, $n$ and $\alpha$.
\begin{theorem}\label{th:maxwishgam}
Let $\mathbf{V}\sim  \Gamma {\cal
W}_2 (n, \alpha, \boldsymbol{\Sigma}, \boldsymbol{\Omega})$ with $\alpha>n\geq 2$. Then the c.d.f.\
of $\lambda_{max}(\mathbf{V})$ is given by
\begin{align}
\label{anmax2}
F_{max}(x)&=
\mathcal{K}_{2,n}x^{2n}
\sum_{k=0}^{2(\alpha-n)}
\sum_{k_1=\left\lceil\frac{k}{2}\right\rceil}^{\min\left(k,(\alpha-n)\right)}
d_1^{k_1}
\sum_{l=0}^{\left\lceil\frac{2k_1-k-1}{2}\right\rceil} d_2^{\kappa,l}\mathcal{R}_{k_1,l}(x)x^k
\end{align}
where
\begin{equation}
\label{defR}
\mathcal{R}_{k_1,l}(x)=\frac{\tilde \Gamma_2(2)\tilde \Gamma_2\left(\nu_{k_1,l}+2\right)}{\tilde \Gamma \left(\nu_{k_1,l}+4\right)}
\phi_{-x\mathbf{Q},\mathbf{S},\nu_{k_1,l}+2}^{(\varepsilon_{k_1,l})}(0),
\end{equation}
$\varepsilon_{k_1,l}=2k_1-k-2l$, $\nu_{k_1,l}=n+l+k-k_1-2$,
$\kappa=\left(k_1,k-k_1\right)$ is a partition of $k$ such that
$k_1\in\left\{0,1,\ldots,(\alpha-n)\right\}$ and
$\left\lceil\frac{k}{2}\right\rceil\leq
k_1\leq\min\left(k,(\alpha-n)\right)$.  The term
$\phi_{-x\mathbf{Q},\mathbf{S},\nu_{k_1,l}+2}^{(\varepsilon_{k_1,l})}(0)$
is calculated recursively via (\ref{sub1})-(\ref{initialcond}).
\end{theorem}
\begin{proof}
Particularizing (\ref{grammax}) to  $m=2$, $\alpha>n\geq 2$ and
$\alpha \in \mathbb{Z}^+$ and applying the zonal polynomial
expansion (\ref{hyptrk}) gives
\begin{align*}
F_{max}(x)=
\mathcal{K}_{2,n}x^{2n}
\sum_{k=0}^{2(\alpha-n)}
\widetilde \sum_{\kappa}
\frac{[-(\alpha-n)]_\kappa}{[n]_\kappa}\frac{(-x)^k}{k!}
 \int_0^{\mathbf{I}_2}
& |\mathbf{Y}|^{n-2}\text{etr}\left(-x\mathbf{QY}\right)\nonumber\\
& \qquad \times C_\kappa(\mathbf{SY})d\mathbf{Y}
\end{align*}
Following the similar reasoning which led to (\ref{cdf2mexpress}),
with some algebraic manipulations we obtain (\ref{anmax2}), but with
\begin{equation*}
\mathcal{R}_{\kappa,l}(x)=
\int_0^{\mathbf{I}_2}
\text{etr}\left(-x\mathbf{QY}\right)|\mathbf{Y}|^{\nu_{k_1,l}}\text{tr}^{\varepsilon_{k_1,l}}(\mathbf{SY})d\mathbf{Y}.
\end{equation*}
This integrals is solved via Lemma \ref{lem:1f1} to yield
(\ref{defR}).
\end{proof}

Note that the c.d.f.\ result in Theorem \ref{th:maxwishgam} can be
evaluated numerically for any value of $n$. Moreover, for specific
values of $n$ it often gives simplified solutions.  Some examples
are shown in the following corollaries.

\begin{corollary}
Let $\mathbf{V}\sim  \Gamma {\cal
W}_2 (n, n+1, \boldsymbol{\Sigma}, \boldsymbol{\Omega})$. Then the
c.d.f.\ of $\lambda_{max}(\mathbf{V})$ is given by
\begin{align}
\label{maxgram}
F_{max}(x)&=
\frac{|\boldsymbol{\Omega}|^{n+1}x^{2n}}{n!(n+1)!|\boldsymbol{\Sigma}|^n|\boldsymbol{\Sigma}^{-1}+\boldsymbol{\Omega}|^{n+1}}
\left({}_1\widetilde F_1\left(n;n+2;-x\mathbf{Q}\right)+\frac{x}{n}\phi_{-x\mathbf{Q},\mathbf{S},n}^{(1)}(0)\right.\nonumber\\
&\hspace{3cm}\left.+
\frac{|\mathbf{S}|x^2}{(n+1)(n+2)}{}_1\widetilde F_1\left(n+1;n+3;-x\mathbf{Q}\right)\right).
\end{align}
\end{corollary}
\begin{corollary}
Let $\mathbf{V}\sim  \Gamma {\cal
W}_2 (n, n+2, \boldsymbol{\Sigma}, \boldsymbol{\Omega})$. Then the
c.d.f.\ of $\lambda_{max}(\mathbf{V})$ is given by
\begin{align}
F_{max}(x)&=
\frac{|\boldsymbol{\Omega}|^{n+2}x^{2n}}{n!(n+1)!|\boldsymbol{\Sigma}|^n|\boldsymbol{\Sigma}^{-1}+\boldsymbol{\Omega}|^{n+2}}
\left({}_1\widetilde F_1\left(n;n+2;-x\mathbf{Q}\right){+}\frac{2x}{n}\phi_{-x\mathbf{Q},\mathbf{S},n}^{(1)}(0)\right.\nonumber\\
&\qquad \qquad+\frac{x^2}{n(n+1)}\phi_{-x\mathbf{Q},\mathbf{S},n}^{(2)}(0)+\frac{2|\mathbf{S}|x^2}{(n+1)^2}{}_1\widetilde F_1\left(n+1;n+3;-x\mathbf{Q}\right)\nonumber\\
& \qquad\qquad+ \frac{2|\mathbf{S}|x^3}{(n+1)^2(n+2)}\phi_{-x\mathbf{Q},\mathbf{S},n+1}^{(1)}(0)\nonumber\\
& \qquad\quad\left.+
\frac{|\mathbf{S}|^2x^4}{(n+1)(n+2)^2(n+3)}{}_1\widetilde F_1\left(n+2;n+4;-x\mathbf{Q}\right)\right).
\end{align}
\end{corollary}

\begin{figure}[h]
\centering
\includegraphics[width=.8\textwidth]{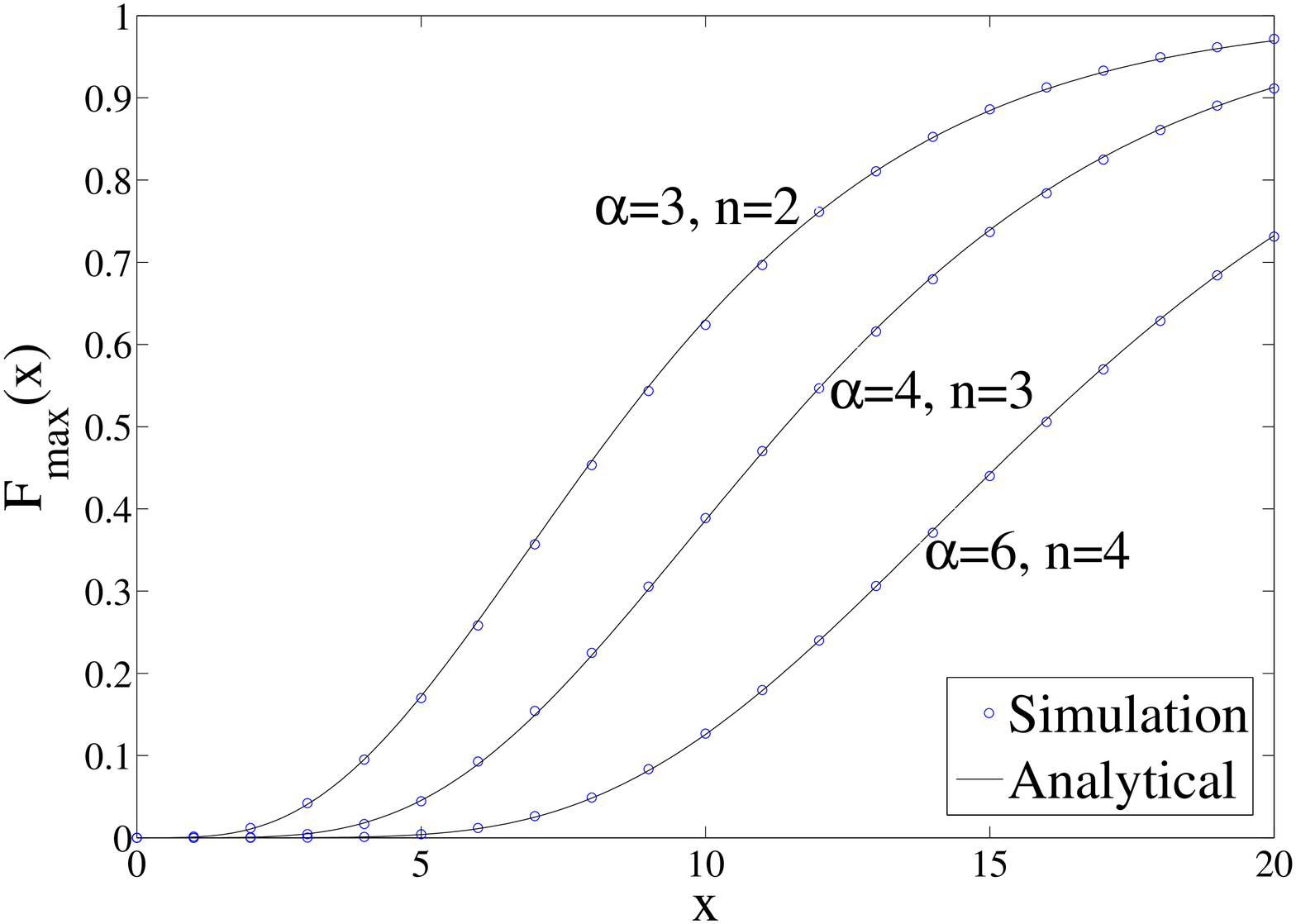}
\caption{Comparison of the analytical maximum eigenvalue c.d.f.s with simulated data points for correlated gamma-Wishart matrices with various dimensions and parameters.}
\label{fig3}
\end{figure}

Fig. \ref{fig3} compares the analytical c.d.f.\ results for the
maximum eigenvalue of gamma-Wishart matrices with simulated data.
the matrix $\boldsymbol{\Sigma}$ and $\boldsymbol{\Omega}$ are
constructed as in (\ref{covmatrix}) and (\ref{gampara})
respectively. The analytical curves were computed based on Theorem
\ref{th:maxwishgam}. The agreement between the analysis and
simulation is clearly evident.

\section{Conclusions}
We have derived new exact closed-form expressions for the c.d.f.\ of the extreme eigenvalues of correlated complex non-central Wishart and gamma-Wishart random matrices. We would like to conclude by emphasizing that these results provide the first tractable exact analytical results pertaining to the eigenvalue distributions of both complex non-central Wishart and gamma-Wishart random matrices with non-trivial correlation structures. Obtaining tractable solutions for extreme eigenvalue densities for generalized parameters (e.g., for arbitrary matrix dimensions) remains an important open problem.

\appendix
\section{Proof of Lemma \ref{lem:factorize}} \label{ap:A}
\begin{proof}
We start by factorizing $x_1^n-x_2^n$ and using
$x_1+x_2=\text{tr}(\mathbf{X})$ and $x_1x_2=|\mathbf{X}|$ to obtain
\begin{equation}
\label{factx}
\frac{x_1^n-x_2^n}{x_1-x_2}=\left\{
\begin{array}{cl}
\text{tr}(\mathbf{X})\displaystyle \prod_{j=1}^{\frac{n-2}{2}}\left(\text{tr}^2(\mathbf{X})-4|\mathbf{X}|\cos^2\left(\frac{\pi j}{n}\right)\right)& \text{even $n$}\\
\displaystyle \prod_{j=1}^{\frac{n-1}{2}}\left(\text{tr}^2(\mathbf{X})-4|\mathbf{X}|\cos^2\left(\frac{\pi j}{n}\right)\right)& \text{odd $n$}.
\end{array}\right.
\end{equation}
Next, recalling the generating function expansion
\begin{equation}
\label{permanent}
\prod_{j=1}^n\left(x-\psi_j y\right)=\sum_{j=0}^n(-1)^j
e_j x^{n-j}y^j
\end{equation}
where $e_i$ denotes the $i$th elementary symmetric function
\cite{Mac} of the parameters $\{\psi_1,\psi_2,\ldots,\psi_n\}$, and
using (\ref{permanent}) in (\ref{factx}) along with some algebra, we
obtain the result.
\end{proof}
\section{Proof of Lemma \ref{lem:1f1}} \label{ap:B}
Using \cite[Eq. 3.23]{Rathna}, we have
\begin{equation}
\label{1f1def}
{}_1\widetilde F_1\left(a;a+2;\mathbf{X}\right)=\mathcal{K}\int_\mathbf{0}^{\mathbf{I}_2}
|\mathbf{Z}|^{a-2}\text{etr}\left(\mathbf{XZ}\right)d\mathbf{Z}
\end{equation}
where $\mathcal{K}=\frac{\tilde \Gamma_2(a+2)}{\tilde
\Gamma_2(a)\tilde \Gamma_2(2)}$ and $\Re(a)>1$. Following the proof
of \cite[Lemma 7]{Khatri1966}, we substitute
$\mathbf{X}=\mathbf{A}+\mathbf{B}y$ into (\ref{1f1def}) to yield
\begin{equation}
{}_1\widetilde F_1\left(a;a+2;\mathbf{A}+\mathbf{B}y\right)=
\mathcal{K}
\int_\mathbf{0}^{\mathbf{I}_2}
|\mathbf{Z}|^{a-2}
\text{etr}\left(\left(\mathbf{A}+\mathbf{B}y\right)\mathbf{Z}\right)d\mathbf{Z}
\end{equation}
where $y\geq 0$. Expanding the term $\text{etr}(\mathbf{BZ}y)$ gives
\begin{equation}
\label{1f1seed}
{}_1\widetilde F_1\left(a;a+2;\mathbf{A}+\mathbf{B}y\right)=
\mathcal{K}
\sum_{p=0}^\infty
\frac{y^p}{p!}
\int_\mathbf{0}^{\mathbf{I}_2}
|\mathbf{Z}|^{a-2}
\text{etr}\left(\mathbf{A}\mathbf{Z}\right)\text{tr}^{p}\left(\mathbf{BZ}\right)d\mathbf{Z}.
\end{equation}
Now, we aim to establish a power series expansion for
${}_1\widetilde F_1\left(a;a+2;\mathbf{A}+\mathbf{B}y\right)$ around
$y=0$. To this end, denote
\begin{equation}
\phi_{\mathbf{A},\mathbf{B},a}(y)={}_1\widetilde
F_1\left(a;a+2;\mathbf{A}+\mathbf{B}y\right) \; .
\end{equation}
We then have
\begin{equation}
\label{1f1series}
{}_1\widetilde F_1\left(a;a+2;\mathbf{A}+\mathbf{B}y\right)=
\sum_{p=0}^\infty
\frac{y^p}{p!}\phi^{(p)}_{\mathbf{A},\mathbf{B},a}(0).
\end{equation}
Equating the coefficient of $y^p$ on both sides of (\ref{1f1seed})
and (\ref{1f1series}) gives (\ref{incomgammaint}).

Following \cite{Orlov,Gross}, we can express the confluent hypergeometric
function of a matrix argument in the determinant form
\begin{equation}
\label{phidef}
\phi_{\mathbf{A},\mathbf{B},a}(y)=\frac{\Delta_{\mathbf{A},\mathbf{B},a}(y)}{h_{\mathbf{A},\mathbf{B}}(y)}
\end{equation}
where $h_{\mathbf{A},\mathbf{B}}(y)=x_1(y)-x_2(y)$. Since we are interested in obtaining $\phi^{(p)}_{\mathbf{A},\mathbf{B},a}(0)$, we may rearrange (\ref{phidef}) such that
\begin{equation*}
h_{\mathbf{A},\mathbf{B}}(y)\phi_{\mathbf{A},\mathbf{B},a}(y)=\Delta_{\mathbf{A},\mathbf{B},a}(y)
\end{equation*}
and apply Leibniz's rule \cite{Gradshteyn} for the $k$th derivative of a product to obtain
\begin{equation}
\sum_{j=0}^p\binom{p}{j}\phi^{(p-j)}_{\mathbf{A},\mathbf{B},a}(y)h^{(j)}_{\mathbf{A},\mathbf{B}}(y)=\Delta^{(k)}_{\mathbf{A},\mathbf{B},a}(y).
\end{equation}
After rearrangement of terms we obtain the following recursive
formula
\begin{equation}
\label{phiy}
\phi^{(p)}_{\mathbf{A},\mathbf{B},a}(y)=\frac{\Delta^{(p)}_{\mathbf{A},\mathbf{B},a}(y)-
\sum_{j=1}^p\binom{p}{j}\phi^{(p-j)}_{\mathbf{A},\mathbf{B},a}(y)h^{(j)}_{\mathbf{A},\mathbf{B}}(y)}{h_{\mathbf{A},\mathbf{B}}(y)}
\end{equation}
which, upon evaluating at $y=0$, gives (\ref{sub1}).

What remains is to evaluate the successive derivatives
$h_{\mathbf{A},\mathbf{B}}^{(j)}(0)$; equivalently, $x^{(j)}_1(0)$
and $x^{(j)}_2(0)$. To this end, we use the relations
\begin{equation}
\label{matrelation}
\begin{split}
x_1(y)+x_2(y)&=\text{tr}\left(\mathbf{A}\right)+y\text{tr}\left(\mathbf{B}\right)\\
x_1(y)x_2(y)& =\left|\mathbf{A}+\mathbf{B}y\right|=|\mathbf{A}|+|\mathbf{A}|\text{tr}\left(\mathbf{BA}^{-1}\right)y+|\mathbf{B}|y^2.
\end{split}
\end{equation}
Evaluating the first derivative of (\ref{matrelation}) with respect to $y$ at $y=0$ gives
\begin{equation}
\begin{split}
x_1^{(1)}(0)+x_2^{(1)}(0)& =\text{tr}(\mathbf{B})\\
x_2(0)x_1^{(1)}(0)+x_1(0)x_2^{(1)}(0) &=|\mathbf{A}|\text{tr}\left(\mathbf{BA}^{-1}\right)
\end{split}
\end{equation}
which upon solving for $x_1^{(1)}(0)$ and $x_2^{(1)}(0)$ gives the
corresponding results in (\ref{x1def}) and (\ref{x2def}) (i.e.,
$j=1$). Taking the second derivative of (\ref{matrelation}),
followed by similar calculations as before, gives the case
corresponding to $j=2$ in (\ref{x1def}) and (\ref{x2def}). The
remaining case, $j\geq 3$, is more challenging. To proceed, let us
take the $j$th derivative of (\ref{matrelation}) for $j \geq 3$ to
obtain
\begin{equation}
\begin{split}
x_1^{(j)}(y)+x_2^{(j)}(y)& =0\\
\sum_{k=0}^j
\binom{j}{k}
x_1^{(j-k)}(y)x_2^{(k)}(y)&=0
\end{split}
\end{equation}
where we have again used the Leibniz's formula to obtain the $j$th
derivative of the product $x_1(y)x_2(y)$. After some rearrangement
of terms followed by evaluating the resultant derivatives at $y=0$
gives
\begin{equation}
\begin{split}
x_1^{(j)}(0)+x_2^{(j)}(0)& =0\\
x_1^{(j)}(0)x_2(0)+x_2^{(j)}(0)x_1(0)&=-\sum_{k=1}^{j-1}
\binom{j}{k}
x_1^{(j-k)}(0)x_2^{(k)}(0).
\end{split}
\end{equation}
These simultaneous equations can easily be solved for $x_1^{(j)}(0)$
and $x_2^{(j)}(0)$ to yield the results in (\ref{x1def}) and
(\ref{x2def}).
\end{proof}
\section{Proof of Lemma \ref{lem:trace}} \label{ap:C}

For $\mathbf{Z}\in\mathcal{H}_m^+$ and $\mathbf{R}\in\mathcal{H}_m$ with rank
one, we have from \cite[Eq. 6.1.20]{Mathai}
\begin{equation}
\label{mat1}
\int_{\mathbf{X}\in\mathcal{H}_m^+}
\text{etr}\left(-\mathbf{ZX}\right)\left|\mathbf{X}\right|^{a-m}C_{\tau}\left(\mathbf{XR}\right)d\mathbf{X}=(a)_t\tilde \Gamma_m(a)
|\mathbf{Z}|^{-a}C_{\tau}\left(\mathbf{RZ}^{-1}\right)
\end{equation}
where $\Re(a)>m-1$ and $\tau$ is a partition of $t$. Following the
proof of \cite[Lemma 7]{Khatri1966}, let us now select $\mathbf{Z}$
such that $\mathbf{Z}=\mathbf{A}+y\mathbf{I}_{m}$, where
$\mathbf{A}\in\mathcal{H}_m^+$ and $y\geq 0$.
Substituting this specific value of $\mathbf{Z}$ into (\ref{mat1}) and
choosing $\mathbf{R}$ such that $\mathbf{R}=\mathbf{r}\mathbf{r}^H$
where $\mathbf{r}\in C^{m\times 1}$, yields
\begin{align}
\label{mat2}
\int_{\mathbf{X}\in\mathcal{H}_m^+}
\text{etr}\left(-\mathbf{AX}-y\mathbf{X}\right)&\left|\mathbf{X}\right|^{a-m}\text{tr}^t\left(\mathbf{XR}\right)d\mathbf{X}=\nonumber\\
& (a)_t\tilde \Gamma_m(a)\left|\mathbf{A}+y\mathbf{I}_m\right|^{-a}\left(\mathbf{r}^H\left(\mathbf{A}+y\mathbf{I}_m\right)^{-1}\mathbf{r}\right)^t.
\end{align}
Moreover, we can expand the term $\text{etr}\left(-y\mathbf{X}\right)$ to obtain
\begin{align}
\label{infexp}
\sum_{k=0}^\infty \frac{(-1)^ky^k}{k!}
\int_{\mathbf{X}\in\mathcal{H}_m^+}
& \text{etr}\left(-\mathbf{AX}\right)\text{tr}^k\left(\mathbf{X}\right)\left|\mathbf{X}\right|^{a-m} \text{tr}^t\left(\mathbf{XR}\right)d\mathbf{X}= \xi (y)
\end{align}
where
\begin{equation}
\xi (y) := (a)_t\tilde
\Gamma_m(a)\left|\mathbf{A}+y\mathbf{I}_m\right|^{-a}\left(\mathbf{r}^H\left(\mathbf{A}+y\mathbf{I}_m\right)^{-1}\mathbf{r}\right)^t
.
\end{equation}
Now, we seek a power series expansion for the real-valued function
$\xi (y)$
around $y=0$. Equating the coefficient of $y$ with that on the
left-hand side of (\ref{infexp}) will then give the desired
expression.

We require $\xi^{(1)}(0)$. To evaluate this, we start with the
eigen-decomposition
$\mathbf{A}=\mathbf{U}\widetilde{\boldsymbol{\Sigma}}\mathbf{U}^H$,
where $\mathbf{U}\in \mathbb{C}^{m\times m}$ is unitary and
$\boldsymbol{\Sigma}=\text{diag}\left(\widetilde \sigma_1,\widetilde
\sigma_2,\ldots,\widetilde \sigma_m\right)$, to obtain
\begin{align}
\label{eigendecA}
\xi (y)
= & (a)_t\tilde \Gamma_m(a)\frac{\left(\displaystyle \sum_{i=1}^m\frac{|h_i|^2}{y+\widetilde \sigma_i}\right)^t}{\displaystyle \prod_{i=1}^m\left(y+\widetilde \sigma_i\right)^a}
\end{align}
 and $\mathbf{U}^H\mathbf{r} =: \mathbf{h}=\left(h_1\;h_2\;\ldots\;h_m\right)^T$. It is not difficult to see that we can
 have a convergent power series if we select $y<\min\left(\widetilde \sigma_1,\widetilde \sigma_2,\ldots,\widetilde \sigma_m\right)$. Finally equating $\xi^{(1)}(0)$ with the coefficient of $y$ on the left-hand side of
(\ref{infexp}) with $\displaystyle \sum_{i=1}^m\frac{|h_i|^2}{\widetilde \sigma_i^n}=\text{tr}\left(\mathbf{r}^H\left(\mathbf{A}^{-1}\right)^n \mathbf{r}\right)=\text{tr}\left(\mathbf{R}\left(\mathbf{A}^{-1}\right)^n\right)$ gives (\ref{theq1}).
 \end{proof}
\section{Proof of Lemma \ref{lem:2by2tracep}} \label{ap:D}
We combine (\ref{infexp}) and (\ref{eigendecA}) for the
case $m=2$ and apply the relation $\sigma_2|h_1|^2+\sigma_1|h_2|^2=|\mathbf{A}|\text{tr}\left(\mathbf{R}\mathbf{A}^{-1}\right)$ to arrive at
\begin{align}
\label{combine}
\sum_{p=0}^\infty \frac{(-1)^py^p}{p!} \int_{\mathbf{X}\in\mathcal{H}_2^+}
\text{etr}\left(-\mathbf{AX}\right)
\text{tr}^p\left(\mathbf{X}\right)\left|\mathbf{X}\right|^{a-2}\text{tr}^t\left(\mathbf{XR}\right)d\mathbf{X}
= \bar \zeta(y)
\end{align}
where
\begin{align}
\label{zetadef} \bar \zeta(y) &=
\mathcal{P}\frac{(y+b)^t}{\left(\frac{y^2}{|\mathbf{A}|}+2\beta
\frac{y}{\sqrt{|\mathbf{A}|}}+1\right)^{a+t}}
\end{align}
with $\mathcal{P}=\displaystyle \frac{(a)_t\tilde
\Gamma_2(a)\text{tr}^t(\mathbf{R})}{|\mathbf{A}|^{t+a}},\;
\beta=\displaystyle
\frac{\text{tr}(\mathbf{A})}{2\sqrt{|\mathbf{A}|}},\;\text{and}\;
b=\displaystyle
\frac{|\mathbf{A}|\text{tr}\left(\mathbf{RA}^{-1}\right)}{\text{tr}(\mathbf{R})}.$

Our objective is to obtain a power series expansion for $\bar
\zeta(y)$ around $y=0$. To this end, we may use the generating
function definition of ultraspherical
polynomials\footnote{Ultraspherical polynomials can be defined
through the generating function as \cite[Eq. 6.4.10]{Askay}
$(1-2xr+r^2)^{-\lambda}=\sum_{n=0}^\infty
\mathcal{C}_n^\lambda(x)r^n$.}  to write
\begin{equation}
\label{ultra}
\left(\frac{y^2}{|\mathbf{A}|}+2\beta \frac{y}{\sqrt{|\mathbf{A}|}}+1\right)^{-(a+t)}=\sum_{n=0}^\infty\frac{\mathcal{C}_{n}^{a+t}\left(-\beta\right)}{|\mathbf{A}|^{\frac{n}{2}}}y^n.
\end{equation}
Now we may use (\ref{ultra}) in (\ref{zetadef}) with binomial theorem to obtain
\begin{equation}
\label{desiredbinom}
\bar \zeta(y)=\mathcal{P}\sum_{n=0}^\infty \sum_{l=0}^t
\binom{t}{l}b^{t-l}\frac{\mathcal{C}_{n}^{a+t}\left(-\beta\right)}{|\mathbf{A}|^{\frac{n}{2}}}y^{n+l}.
\end{equation}
Since the desired general form of the expansion is 
 \begin{equation}
 \label{desired}
 \bar \zeta(y)=\sum_{p=0}^{\infty}\frac{(-1)^p}{p!}\mathcal{A}_py^p,
 \end{equation}
what is left is to determine the coefficient $\mathcal{A}_p$ using
(\ref{desiredbinom}). To this end, we must collect the coefficients
of $y^p$ together. Since (\ref{desiredbinom}) contains a finite
inner summation, we have to consider two cases depending on the
value of $t$; namely $p\leq t$ and $p> t$. When $p\leq t$, the
summation indices are selected from the set $l,n=\{0,1,2,\ldots,p\}$
such that $l+n=p$. In the case of $p>t$, the summation indices are
selected from the sets $l=\{0,1,2,\ldots,t\}$ and
$n=\{p-t,p-t+1,p-t+2,\ldots,p\}$ such that $l+n=p$. Putting these
together, we come up with a new set
 \begin{equation}
 k=\{0,1,2,\ldots,\min(p,t)\},\; n=p-k
 \end{equation}
which embraces both cases. Thus, the coefficient $\mathcal{A}_p$ can
be written as
   \begin{equation}
 \mathcal{A}_p=\mathcal{P}p!\sum_{k=0}^{\min(p,t)}
 (-1)^k\binom{t}{k}b^{t-k}\frac{\mathcal{C}_{p-k}^{a+t}\left(\beta\right)}{|\mathbf{A}|^{\frac{p-k}{2}}}
 \end{equation}
 where we have used the fact that $\mathcal{C}_n^\nu(-z)=(-1)^n\mathcal{C}_n^\nu(z)$.
Using this, equating the coefficient of $y^p$ in (\ref{desired}) and
(\ref{combine}) concludes the proof.
\end{proof}
\section{Proof of Lemma \ref{lem:3by4rank1}} \label{ap:E}
Before proceeding, it is worth mentioning the following relation
\begin{equation}
\label{110zoanlex}
C_{1,1,0}(\mathbf{X})=|\mathbf{X}|\text{tr}\left(\mathbf{X}^{-1}\right)=|\mathbf{X}|C_{1,0,0}\left(\mathbf{X}^{-1}\right)
\end{equation}
where $\mathbf{X}\in\mathcal{H}_3^+$. Also, for $\mathbf{Z}\in\mathcal{H}_3^+$, we have from \cite[Eq.
3.10]{Rathna}
\begin{equation}
\label{110int}
\int_{\mathbf{X}\in\mathcal{H}_3^+}
\text{etr} \left(-\mathbf{Z}\mathbf{X}\right)|\mathbf{X}|
C_{1,0,0}(\mathbf{X}^{-1})d\mathbf{X}
=
\tilde \Gamma_3(4)|\mathbf{Z}|^{-4}C_{1,0,0}(\mathbf{Z}).
\end{equation}
Following the proof of \cite[Lemma 7]{Khatri1966}, let us substitute
$\mathbf{Z}=\mathbf{A}+\mathbf{R}y$, for $y\geq 0$, into
(\ref{110int}) to obtain
\begin{align}
\label{110det} &\int_{\mathbf{X}\in\mathcal{H}_3^+} \text{etr}
\left(-\mathbf{A}\mathbf{X}-\mathbf{RX}y\right)|\mathbf{X}|
C_{1,0,0}(\mathbf{X}^{-1})d\mathbf{X} \nonumber \\
& \hspace*{2cm} =\tilde
\Gamma_3(4)|\mathbf{A}|^{-4}\left|\mathbf{I}_3+\mathbf{A}^{-1}\mathbf{R}y\right|^{-4}
\text{tr}\left(\mathbf{A}+\mathbf{R}y\right).
\end{align}
Since $\mathbf{R}$ is unit rank, $\mathbf{A}^{-1}\mathbf{R}$ is also
unit rank, and therefore (\ref{110det}) can be written as
\begin{align}
\label{110hypo} & \int_{\mathbf{X}\in\mathcal{H}_3^+} \text{etr}
\left(-\mathbf{A}\mathbf{X}-\mathbf{RX}y\right)|\mathbf{X}|
C_{1,0,0}(\mathbf{X}^{-1})d\mathbf{X} \nonumber \\
& \hspace*{2cm} =\tilde
\Gamma_3(4)|\mathbf{A}|^{-4}\text{tr}\left(\mathbf{A}+\mathbf{R}y\right)
{}_1F_0\left(4;-\text{tr}\left(\mathbf{A}^{-1}\mathbf{R}\right)y\right)
\end{align}
where we have used the relation $1/(1+z)^n={}_1F_0(n;-z)$. Now,
since $y$ is arbitrary, we select
$y<1/\text{tr}\left(\mathbf{A}^{-1}\mathbf{R}\right)$ to obtain a
power series expansion for the right-hand side of (\ref{110hypo}) as
\begin{align}
\label{eq:expand}
& \int_{\mathbf{X}\in\mathcal{H}_3^+} \text{etr}
\left(-\mathbf{A}\mathbf{X}-\mathbf{RX}y\right)|\mathbf{X}|
C_{1,0,0}(\mathbf{X}^{-1})d\mathbf{X} \nonumber \\
& \hspace*{2cm} = \tilde
\Gamma_3(4)|\mathbf{A}|^{-4}\text{tr}\left(\mathbf{A}{+}\mathbf{R}y\right)\sum_{t=0}^\infty
\frac{(4)_t}{t!}\text{tr}^t\left(\mathbf{A}^{-1}\mathbf{R}\right)(-y)^t.
\end{align}
Finally, expanding the left side of (\ref{eq:expand}) as a power series of $y$ followed by equating the coefficient of $(-y)^t$ on both sides with some manipulations conclude the proof.
\end{proof}
\section{Proof of Lemma \ref{lem:tracegamma}} \label{ap:F}
We first solve
$$\int_{\mathbf{X}\in\mathcal{H}_2^+}
\text{etr}\left(-\mathbf{A}\mathbf{X}\right)\text{tr}^p\left(\mathbf{BX}\right)\left|\mathbf{X}\right|^{a-2}C_{\tau}(\mathbf{X})d\mathbf{X}
\; .$$ Subsequent application of the basic property $\sum_{\tau}C_\tau (\mathbf{X})=\text{tr}^t(\mathbf{X})$ will then yield the desired result.

Let us begin with the following matrix integral \cite[Eq. 6.1.20
]{Mathai}
\begin{equation}
\label{intzonaldef}
\int_{\mathbf{X}\in\mathcal{H}_2^+}
\text{etr}\left(-\mathbf{Z}\mathbf{X}\right)\left|\mathbf{X}\right|^{a-2}
C_{\tau}(\mathbf{X})d\mathbf{X}=\tilde \Gamma_2(a)[a]_\tau|\mathbf{Z}|^{-a}C_{\tau}(\mathbf{Z}^{-1})
\end{equation}
where $\mathbf{Z}\in\mathcal{H}_2^+$ and $\Re(a)>1$. Selecting
$\mathbf{Z}=\mathbf{A}+\mathbf{B}y$, where $\mathbf{A},\mathbf{B}\in\mathcal{H}_2^+$,
(\ref{intzonaldef}) becomes
\begin{align}
\label{intmatrixsub} &  \int_{\mathbf{X}\in\mathcal{H}_2^+}
\text{etr}\left(-\mathbf{AX}-\mathbf{BX}y\right)\left|\mathbf{X}\right|^{a-2}
C_{\tau}(\mathbf{X})d\mathbf{X} = \zeta(y)
\end{align}
where
\begin{align}
\label{coeffeq} \zeta(y)=\;& \tilde \Gamma_2(a)[a]_\tau
|\mathbf{A}+\mathbf{B}y|^{-a}C_{\tau}(\left(\mathbf{A}+\mathbf{B}y\right)^{-1}).
\end{align}
Since the left-hand side of (\ref{intmatrixsub}) can be expanded as a power series in $y$,
the remaining task is to find a power series expansion for the
right-hand side of (\ref{intmatrixsub}), i.e., $\zeta(y)$, so that
the coefficient of $y^p$ can be compared on both sides.
To this end, we expand the zonal polynomials in (\ref{coeffeq}) using \cite[Eq. 6.1.12]{Mathai}
to obtain
\begin{equation}
\label{zonalcha}
\zeta(y)=\tilde \Gamma_2(a)[a]_\tau \frac{t!(t_1-t_2+1)}{(t_1+1)!t_2!}|\mathbf{A}+\mathbf{B}y|^{-(a+t_1)}\gamma
\end{equation}
where $\gamma=\frac{\mu_1^{t_1-t_2+1}-\mu_2^{t_1-t_2+1}}{\mu_1-\mu_2}$
and $\mu_1,\mu_2$ are the eigenvalues of $\mathbf{A}+\mathbf{B}y$.
At this point, observe that since $\left(t_1,t_2\right)$ is a
partition of $t$, we can write $t_2=t-t_1$, where
$\left\lceil\frac{t}{2}\right\rceil \leq t_1\leq t$. With this
observation and the aid of Lemma \ref{lem:factorize}, we then obtain
\begin{align}
\label{zonaldetexpan}
\gamma=\sum_{i=0}^{\left\lceil\frac{2t_1-t-1}{2}\right\rceil}
(-1)^i4^ie_i^\tau
\text{tr}^{\varepsilon_{t_1,i}}\left(\mathbf{A}+\mathbf{B}y\right)\left|\mathbf{A}+\mathbf{B}y\right|^{-(a+\varepsilon_{t_1})}
\, .
 \end{align}
Next, with the binomial expansion we get,
\begin{align}
\label{binomial}
\zeta(y)=\overline{K}_{t_1}\tilde \Gamma_2(a)
\sum_{i=0}^{\left\lceil\frac{2t_1-t-1}{2}\right\rceil}\sum_{k=0}^{\varepsilon_{t_1,i}}
(-1)^i4^i& e_i^\tau
\binom{\varepsilon_{t_1,i}}{k}
\text{tr}^{\varepsilon_{t_1,i}-k}(\mathbf{A})\text{tr}^{k}(\mathbf{B})\nonumber\\
& \times \left|\mathbf{A}+\mathbf{B}y\right|^{-(a+\varepsilon_{t_1})}y^k.
\end{align}
where $\overline{K}_{t_1} :=
t!\frac{(a)_{t_1}(a-1)_{t_1-1}\left(2t_1-t+1\right)}{\left(t_1+1\right)!\left(t-t_1\right)!}$.

We now aim to obtain a power series expansion for
$\left|\mathbf{A}+\mathbf{B}y\right|^{-(a+\varepsilon_{t_1})}$ in
terms of $y$. To this end, we may express
\begin{align}
\label{detexpansion1}
\left|\mathbf{A}+\mathbf{B}y\right|^{-(a+\varepsilon_{t_1})}& =
 \frac{|\mathbf{A}|^{-(a+\varepsilon_{t_1})}}{\left(1+2\widetilde \beta\sqrt{|\mathbf{A}^{-1}\mathbf{B}|} y+|\mathbf{A}^{-1}\mathbf{B}|y^2\right)^{a+\varepsilon_{t_1}}}\nonumber\\
& =|\mathbf{A}|^{-(a+\varepsilon_{t_1})}
\sum_{n=0}^\infty
|\mathbf{A}^{-1}\mathbf{B}|^{\frac{n}{2}}\mathcal{C}_n^{a+\varepsilon_{t_1}}\left(\beta\right)(-y)^n
\end{align}
where $\widetilde\beta
:=\frac{\text{tr}\left(\mathbf{A}^{-1}\mathbf{B}\right)}{2\sqrt{\left|\mathbf{A}^{-1}\mathbf{B}\right|}}$
with $|y|<\frac{1}{\sqrt{\left|\mathbf{A}^{-1}\mathbf{B}\right|}}$.
Here, to obtain the last equality in (\ref{detexpansion1}), we have
exploited the generating function definition for ultraspherical
polynomials \cite[Eq. 6.4.10]{Askay}.

Incorporating (\ref{detexpansion1}) into (\ref{binomial}) gives
\begin{align}
\zeta(y)=\overline{K}_{t_1}\tilde \Gamma_2(a)
& \sum_{i=0}^{\left\lceil\frac{2t_1-t-1}{2}\right\rceil}\sum_{k=0}^{\varepsilon_{t_1,i}}
(-1)^{i+k}4^ie_i^\tau
\binom{\varepsilon_{t,i}}{k}
\text{tr}^{\varepsilon_{t_1,i}-k}(\mathbf{A})\text{tr}^{k}(\mathbf{B})\nonumber\\
&\times
\left|\mathbf{A}\right|^{-(a+\varepsilon_{t_1})}\sum_{n=0}^\infty
|\mathbf{A}^{-1}\mathbf{B}|^{\frac{n}{2}}\mathcal{C}_n^{a+\varepsilon_{t_1}}\left(\beta\right) (-y)^{n+k}.
\end{align}
Since we are interested in the coefficient of $(-y)^p$, we have to
re-sum the above series to collect all terms having power $(-y)^p$.
A careful inspection of the above equation reveals that we can
select $n=p-k$ and the upper limit of $k$ as
$\min\left(p,\varepsilon_{t_1,i}\right)$. Thus, we have after some
manipulations
\begin{align}
\zeta(y)=\overline{K}_{t_1} \tilde \Gamma_2(a)
\left|\mathbf{A}\right|^{-a}
\sum_{p=0}^\infty
\sum_{i=0}^{\left\lceil\frac{2t_1-t-1}{2}\right\rceil}
\mathcal{B}_{\tau,p,i}\;(-y)^p.
\end{align}
Now, equating the coefficient of $(-y)^p$ with the corresponding
coefficient in (\ref{intmatrixsub}) yields
\begin{align*}
\label{twotraceeq}
\int_{\mathbf{X}\in\mathcal{H}_2^+}
\text{etr}\left(-\mathbf{A}\mathbf{X}\right)\text{tr}^p\left(\mathbf{BX}\right)
 \left|\mathbf{X}\right|^{a-2} C_{\tau}(\mathbf{X})d\mathbf{X}=
\overline{K}_{t_1} p!\tilde \Gamma_2(a)
\left|\mathbf{A}\right|^{-a}\sum_{i=0}^{\left\lceil\frac{2t_1-t-1}{2}\right\rceil}
\mathcal{B}_{\tau,p,i}.
\end{align*}
Finally, using the basic property $\sum_{\tau}C_\tau (\mathbf{X})=\text{tr}^t(\mathbf{X})$ along with the
fact that
$\sum_{\tau}\equiv\sum_{t_1=\left\lceil\frac{t}{2}\right\rceil}^t$
gives the desired result.
\end{proof}






\begin{thebibliography}{44}


\bibitem{Alfano}
G.~Alfano and A.~De~Maio, A theoretical framework for LMS MIMO communication systems performance analysis, in Proc. Waveform Div. and Design Conf. (2007), pp. 18-22.

\bibitem{Askay}
G. E. Andrews, R. Askey, and R. Roy, Special functions, Cambridge University Press, 1999.

\bibitem{Bol}
H.~B\"olcskei, M.~Borgmann, and A.~J.~Paulraj, Impact of the propagation environment on the performance of space-frequency coded MIMO-OFDM, IEEE J. Select. Areas Commun. 21 (3), pp. 427-439.

\bibitem{Bronk} B.~V.~Bronk, Exponential ensemble for random matrices, J. Math. Phys. 6 (1965), pp. 228-237.

\bibitem{Caro}
F.~J.~Caro-Lopera, J.~A.~Diaz-Garcia, and G.~Gonzalez-Farias, A formula for Jack polynomials of second order, Zastoso. Matem. 34 (1), pp. 113-119.

\bibitem{ChenManning}
Y. Chen and S. M. Manning, Some eigenvalue distribution functions of the Laguerre ensemble, J. Phy. A: Math. and Gen. 29 (23) (1996), pp. 7561-7579.

\bibitem{Chi}
M.~Chiani, M.~Z.~Win, and A.~Zanella, On the capacity of spatially correlated MIMO Rayleigh-fading channels, IEEE Trans. Inform. Theory 46 (10) (2003), pp. 2363-2371.

\bibitem{Chikuse1986}
Y.~Chikuse and A.~W.~Davis, A survey on the invariant polynomials with matrix arguments in relation to econometric distribution theory, Econometric Theory 2 (2) (1986), pp. 232-248.

\bibitem{Chuk}
Y.~Chikuse, Generalized noncentral Hermite and Laguerre polynomials in multiple matrices, Linear Alge. and Appl. 210 (1994), pp. 209-226.

\bibitem{Const}
A.~G.~Constantine, Some non-central distribution problems, Ann. Math. Statist. 34 (1963), pp.~1270-1285.

\bibitem{Davis1979}
A.~W.~Davis, Invariant polynomials with two matrix arguments extending the zonal polynomials: Applications to multivariate distribution theory, Ann. Int. Statist. Math. 31 (1979), pp. 465-485.

\bibitem{Davis1980}
A.~W.~Davis, Invariant polynomials with two matrix arguments extending the zonal polynomials, In P.~R. Krishnaiah (ed.), Multivariate Analysis V, Amsterdam: North-Holland (1980a), pp. 287-299.

\bibitem{Ranjan1}
P. A. Dighe, R. K. Mallik, and S. S. Jamuar, Analysis of transmit-receive diversity in Rayleigh fading, IEEE Trans. Commun. 51 (4) (2003), pp. 694-703.

\bibitem{Edleman}
A.~Edelman, Eigenvalues and condition numbers of random matrices, Ph.D. dissertation, MIT, 1989.

\bibitem{Erdelyi}
A.~Erdelyi, Higher transcendental functions, New York: MacGraw-Hill, 1953, vol.~1.

\bibitem{Forrester}
P.~J.~Forrester, Eigenvalue distributions for some correlated complex sample covariance matrices, J. Phys. A: Math. Theor. 40 (36) (2007), pp. 11093-11103.

\bibitem{Forrester2}
P.~J.~Forrester, The distribution of the first eigenvalue at the hard edge of the Laguerre unitary ensemble, Kyushu Math. J. 61 (2007), pp. 457-526.

\bibitem{Good}
N.~R.~Goodman, Statistical analysis based on a certain multivariate complex Gaussian distribution (An introduction), Ann. Math. Statist. 34 (1963), pp.~152-177.

\bibitem{Gradshteyn}
I.~S.~Gradshteyn and I.~M.~Ryzhik, Tables of integrals, series and products, 5th ed. New York: Academic Press, 1994.

\bibitem{Gross}
K. I. Gross and D. S. P. Richards, Total positivity, spherical series, and hypergeometric functions of matrix argument, J. Approx. Theory 59 (2) (1989), pp. 224-246.

\bibitem{Guptanew}
A. K. Gupta, Y. Sheena, and Y. Fujikoshi, Estimation of the eigenvalues of noncentrality parameter matrix in noncentral Wishart distribution, J. Multivariate Anal. 38 (1991), pp. 213-232.

\bibitem{Robert1}
R.~W.~Heath and D.~J.~Love, Multimode antenna selection for spatial multiplexing systems with linear receivers, IEEE Trans. Signal Process. 53 (8) (2005), pp. 3042-3056.

\bibitem{James1964}
A.~T.~James, Distribution of latent roots and matrix variates derived from normal samples, Ann. Math. Statist. 35 (2) (1964), pp. 475-501.

\bibitem{James1968}
A.~T.~James, calculation of zonal polynomial coefficients by use of the Laplace-Beltrami operator, Ann. Math. Statist. 39 (1968), pp. 1711-1718.

\bibitem{Shijin}
S.~Jin, M.~R.~McKay, X.~Gao, and I.~B.~Collings, MIMO multichannel
beamforming: SER and outage using new eigenvalue distributions of
complex noncentral Wishart matrices, IEEE~Trans.~Commun. 56 (3), pp. 424-434.

\bibitem{Alouini}
M. Kang and M.-S. Alouini, Largest eigenvalue of complex Wishart
matrices and performance analysis of MIMO MRC systems, IEEE J. Selected Areas Commun. 21 (3) (2003), pp. 418-426.

\bibitem{Khatri1964}
C.~G.~Khatri, Distribution of the largest or the smallest characteristic root under null hypothesis concerning complex multivariate normal populations, Ann. Math. Statist. 35 (1964), pp. 1807-1810.

\bibitem{Khatri1966}
C.~G.~Khatri, On certain distribution problems based on positive definite quadratic functions in normal vectors, Ann. Math. Statist. 37 (1966), pp. 468-479.

\bibitem{Khatri1968}
C.~G.~Khatri, Non-central distributions of \textit{i}th largest characteristic roots of three matrices concerning complex multivariate normal populations, Ann. Inst. Statist. Math. 21 (1969), pp. 23-32.

\bibitem{Kov1}
P.~Koev and I.~Dumitriu, Distribution of the extreme eigenvalues of the complex Jacobi random matrix ensemble, Siam J. Matrix. Anal. Appl. 30 (1) (2005), pp.~1-6.

\bibitem{Kov2}
P.~Koev and E.~Edelman, The efficient evaluation of the hypergeometric function of a matrix argument, Math. Comp. 75 (254) (2006), pp.~833-846.

\bibitem{Martinez}
C. L\'{o}pez-Mart\'{i}nez, E. Pottier, and S. R. Cloude, Statistical assessment of eigenvector-based target decomposition theorems in radar polarimetry, IEEE Trans. Geosci. Remote Sens. 43 (9) (2005), pp. 2058-2073.

\bibitem{Maraf}
A. Maaref and S. A\"{\i}ssa, Joint and marginal eigenvalue
distributions of (non) central complex Wishart matrices and
PDF-based approach for characterizing the capacity statistics of
MIMO Ricean and Rayleigh fading channels, IEEE Trans. Wireless
Commun. 6 (10) (2007), pp. 3607-3619.

\bibitem{Mac}
I.~G.~Macdonald, Symmetric functions and Hall polynomials, 2nd ed. Oxford: Clarendon Press, 1995.

\bibitem{Satya}
S.~N.~Majumdar, O.~Bohigas, and A.~Lakshminarayan, Exact minimum eigenvalue distribution of an entangled random pure state, J. Stat. Phy. 131 (1) (2008), pp. 33-49.

\bibitem{Mathai3}
A.~M.~Mathai and S.~B.~Provost, Quadratic forms in random variables, Marcell Dekker, Inc., New York, 1992.

\bibitem{Mathai}
A.~M.~Mathai, S.~B.~Provost, and T.~Hayakawa, Bilinear forms and zonal polynomials, New York: Springer-Verlag, 1995.

\bibitem{Mathai2}
A.~M.~Mathai, Jacobians of matrix transformations and functions of matrix argument, Singapore: World Science Publishing Co., 1997.

\bibitem{MatthewIT}
M.~R.~McKay and I.~B.~Collings, General capacity bounds for spatially correlated Rician channels, IEEE Trans.~Inform. Theory 51 (9) (2005), pp. 3121-3145.

\bibitem{McKayThesis}
M.~R.~McKay, Random matrix theory analysis of multiple-antenna communication systems, Ph.D. dissertation, University of Sydney,
2006. Available at \emph{http://ihome.ust.hk/$\sim$eemckay/}.

\bibitem{Matthew}
M.~R.~McKay, A.~J.~Grant, and I.~B.~Collings, Performance analysis of MIMO-MRC in double-correlated Rayleigh environments, IEEE Trans. Commun. 55 (3) (2007), pp. 497-507.

\bibitem{Muirhead}
R.~J.~Muirhead, Aspects of multivariate statistical theory, New York: John Wiley and Sons, 1982.

\bibitem{Nadal}
C.~Nadal and S.~N.~Majumdar, Nonintersecting Brownian interfaces and Wishart random matrices, Phy. Review E, 79 (6) (2009), pp. 61117.

\bibitem{Nara}
R.~Narasimhan, Spatial multiplexing with transmit antenna and constellation selection for correlated MIMO fading channels, IEEE Trans. Signal Process. 51 (11), pp. 2829-2838.

\bibitem{Orlov}
A.~Y.~Orlov, New solvable matrix integrals, in Proc. 6th Int. Workshop on Conformal Field Theory and Integrable Models, 19 (2004), pp. 441-456.

\bibitem{Rathna}
T.~Ratnarajah, Topics in complex random matrices and information theory, Ph.D. dissertation, University of Ottawa, 2003.

\bibitem{Rathna1}
T.~Ratnarajah and R.~Vaillancourt, and M.~Alvo, Eigenvalues and condition numbers of complex random matrices, Siam J. Matrix Anal. Appl. 26 (2) (2005), pp.~441-456.

\bibitem{Sengupta}
A. M. Sengupta and P. P. Mitra, Distributions of singular values for some random matrices, Phy. Review E 60 (3) (1999), pp. 3389-3392.

\bibitem{Aris}
S. H. Simon and A.L. Moustakas, Eigenvalue density of correlated complex random Wishart matrices, Phy. Rev. E, 69 (6) (2004), pp. 065101-1-4.

\bibitem{PeterJ}
 P. J. Smith and L. M. Garth, Distribution and characteristic functions for correlated complex Wishart matrices, J. Multivariate Anal. 98 (4) (2007) pp. 661-677.

\bibitem{Stock}
J.~H.~Stock and M.~Yogo, Testing for weak instruments in linear IV regression, NBER Tech. Working Paper No. 284 (2004).

\bibitem{Takemura}
A. Takemura, Zonal polynomials, Institute of Mathematical Statistics, 1984.

\bibitem{telatar}
E.~Telatar, Capacity of multi-antenna Gaussian channels, Europ. Trans. Telecomm., 10 (1999), pp. 585-595.

\bibitem{Verdu}
A.~M.~Tulino and S.~Verd\'u, ``Random matrix theory and wireless communications: Foundations and Trends in Communications and Information Theory 1 (1) (2004), pp. 1-163.

\bibitem{Wig}
E.~P.~Wigner, Random matrices in physics, Siam Review 9 (1) (1967), pp. 1-23.

\bibitem{Zanella}
A. Zanella, M. Chiani, and M. Z. Win, On the marginal distribution of the eigenvalues of Wishart matrices, IEEE Trans. Wireless Commun. 57 (4) (2009), pp. 1050-1060.















































































 \end{thebibliography}



\end{document}